\numberwithin{equation}{section}
\theoremstyle{plain}
\newtheorem{thm}{Theorem}[section]
\newtheorem{assumption}{Assumption}[section]
\newtheorem{rem}{Remark}[section]
\newtheorem{prop}{Proposition}[section]
\newtheorem{lem}{Lemma}[section]
\newtheorem{corollary}{Corollary}[section]
  \def\a{\alpha}
  \def\s{\sigma}
\begin{document}

\begin{frontmatter}
\title{Empirical  Q-Value Iteration}
\runtitle{Empirical  Q-Value Iteration}

\begin{aug}
\author{\fnms{Dileep} \snm{Kalathil}\thanksref{t1}\ead[label=e1]{dileep.kalathil@tamu.edu}},
\author{\fnms{Vivek} \snm{S. Borkar}\thanksref{t2}\ead[label=e2]{borkar.vs@gmail.com }}
\and
\author{\fnms{Rahul} \snm{Jain}\thanksref{t3}\ead[label=e3]{rahul.jain@usc.edu}
}

\thankstext{t1}{Dileep Kalathil is an Assistant Professor in the ECE department at Texas A\&M University, College Station.}
\thankstext{t2}{Vivek Borkar is professor in EE department at IIT Bombay.  His work was supported in part by a J.\ C.\ Bose Fellowship and a grant for `\textit{Distributed Computation for Optimization over Large Networks and High Dimensional Data Analysis}' from the Department of Science and Technology, Government of India.}
\thankstext{t3}{Rahul Jain is an associate professor and the Kenneth C. Dahlberg Early Career Chair in the Departments of EE, CS and ISE at the University of Southern California (USC), Los Angeles. RJ and DK's research was supported by the Office of Naval Research  (ONR) Young Investigator Award - N000141210766 and the National Science Foundation (NSF) CAREER Award - 0954116}
\runauthor{Kalathil, Borkar and Jain}


\address{Dileep Kalathil\\
Dept. of Electrical and Computer Engineering\\
Texas A\&M University, \\
\printead{e1} }

\address{Vivek S. Borkar\\
Dept. of Electrical Engineering\\
IIT Mumbai, Mumbai, India - 400076 \\
\printead{e2}}

\address{Rahul Jain\\
328, Dept. of Electrical Engineering \\
USC, Los Angles, CA-90089 \\
\printead{e3}}

\end{aug}

\begin{abstract}
We propose a new simple and natural algorithm for learning the optimal $Q$-value function of a discounted-cost  Markov Decision Process (MDP) when the transition kernels are unknown. Unlike the classical learning algorithms for MDPs, such as $Q$-learning and `actor-critic' algorithms, this algorithm doesn't depend on a stochastic approximation-based method. We  show that our algorithm, which we call the \textit{empirical $Q$-value iteration} (EQVI) algorithm, converges  to the optimal $Q$-value function. We also give a rate of convergence or a non-asymptotic sample complexity bound, and also show that an asynchronous (or online) version of the algorithm will also work. Preliminary experimental results suggest a faster rate of convergence to a ball park estimate for our algorithm compared to stochastic approximation-based algorithms.  
\end{abstract}

\end{frontmatter}

\section{Introduction}
\label{sec:intro}

Q-Learning algorithm of Watkins \cite{Watkins,watkins1992q} has been an early and among the most popular and widely-used algorithms for approximate dynamic programming for Markov decision processes. An important feature of this and other algorithms of this ilk (actor-critic, TD$(\lambda)$, LSTD, LSPE, natural gradient, etc.) has been that they are \textit{stochastic approximations}, i.e., recursive schemes that update a vector \textit{incrementally} based on observed payoffs \cite{jaakkola1994convergence}. This is achieved by using step-sizes that are either decreasing slowly in a precise sense or equal a small positive constant. In either case, this induces a slower time scale for the iteration compared to the `natural' time scale on which the underlying stochastic phenomena evolve. Thus the two time scale effects such as averaging kick in, ensuring that the algorithm effectively follows an averaged dynamics, i.e., its original dynamics averaged out over the random processes affecting it on the natural time scale. The iterations are designed such that this averaged dynamics has the desired convergence properties.  This extends even when the algorithm is asynchronous, e.g., Q-Learning \cite{tsitsiklis1994asynchronous} In fact, it can be generalized to stochastic approximations for general non-expansive maps \cite{abounadi2002stochastic,yu2013boundedness}. 

What we propose here is an alternative scheme for Q-Learning that is \textit{not} incremental and therefore evolves on the natural time scale. It does the usual  Q-value iteration with the proviso that the conditional averaging with respect to the actual transition kernel of the underlying controlled Markov chain is replaced by a simulation-based empirical surrogate. One obvious advantage one might expect from this is that if it works, it will have much faster convergence. {Indeed, this was observed earlier in \cite{kearns1999finite} which called it a phased-Q Learning algorithm. A sample complexity result was provided via some back-of-the-envelope calculations though convergence is not implied. Our contribution is to provide a rigorous proof that it indeed works and provide simulation evidence that the expected fast convergence to a ball park estimate is indeed a reality, though the theoretically predicted convergence is much slower. We first show that with fixed number of samples iterates almost surely converge to a random vector and then show that it coincides with the optimal Q-value function. Then, we obtain the rate of convergence and sample complexity bounds via a random operator analysis technique based on stochastic dominance.}

The proof technique we use should be of independent interest  as it is based upon the constructs borrowed from the celebrated backward coupling scheme for exact simulation \cite{propp1996exact} (see also \cite{DiaconisFreedman} for a discussion of the scheme and other related dynamics). In hindsight, this need not be surprising, as value and Q-value iterations in finite time yield finite horizon values/Q-values `looking backward' with the initial guess as the terminal cost.

There is enormous literature on reinforcement learning for approximate dynamic programming and there is no point in even attempting a bird's eye view here. We refer the reader instead to the classic \cite{Bertsekas_Neuro_1996} and its update in Chapters 6 and 7 of \cite{Bertsekas}. Other related expositions are \cite{sutton1998reinforcement,szepesvari2010algorithms,powell2007approximate}.

We set up the framework and state the main result in the next section, followed by its proof in section 3. Section 4 presents rate of convergence analysis, a non-asymptotic sample complexity bound and its asynchronous and online extensions. Section 5 presents some simulation results and section 6 concludes with pointers to future possibilities.

\section{Preliminaries and Main Result}
\label{sec:prelim}

\subsection{MDPs}

Consider an MDP on a finite state space $\mathbb{S}$ and a finite action space $\mathbb{A}$. Let $\mathcal{P}(\mathbb{A})$ denote the space of all probability measures on $\mathbb{A}$. Also given is a transition kernel
\begin{displaymath}
p:(s,a,s') \in \mathbb{S}\times\mathbb{A}\times\mathbb{S} \mapsto p(s'|s,a) \in [0, 1]
 \end{displaymath}
satisfying  $\sum_{s^{\prime} \in \mathbb{S} } p(s^{\prime}|s,a) =1$. Let $c: \mathbb{S} \times \mathbb{A} \rightarrow \mathbb{R}_{+}$ denote the cost function which depends on the state-action pair. 

An MDP is a controlled Markov chain $\{X_{t}\}$ on the set $\mathbb{S}$ controlled by an $\mathbb{A}$-valued control process $\{Z_{t}\}$ such that $P(X_{t+1}=s|X_{r},Z_{r}, r \leq t)=p(s|X_{t},Z_{t})$. Define $\Pi$ to be the class of \textit{stationary  randomized policies}: mappings $\pi\mbox{ : }\mathbb{S}\rightarrow \mathcal{P}(\mathbb{A})$ such that $\pi(X_t)$ is the conditional distribution of $Z_t$ given $\{X_r, Z_r, r < t; X_t\}$ for all $t$. Our objective is to minimize over all admissible $\{Z_{t}\}$ the infinite horizon discounted cost $\mathbb{E}[\sum^{\infty}_{t=0} \gamma^{t} c(X_{t},Z_{t})]$ where $\gamma \in (0, 1)$ is the discount factor.  It is well known that  $\Pi$ contains an optimal policy which minimizes the infinite horizon  discounted cost \cite{Put05}. Also, let $\Sigma$ denote the set of non-stationary policies $\{\sigma_t\}$ with $\sigma_t\mbox{ : }\mathbb{S}\rightarrow \mathcal{P}(\mathbb{A})$, i.e., $\sigma_t(X_t)$ is the conditional distribution of $Z_t$ given $\{X_r, Z_r, r < t; X_t\}$ for each $t$. 

For any $\pi \in \Pi$, we define the transition probability matrix $P^{\pi}$ as,
\begin{equation}
\label{eq:ppi}
P^{\pi}(s, s') := \sum_{a \in \mathbb{A}} p(s' | s, a) \pi(s, a).
\end{equation}
We make the following assumption.
\begin{assumption}
\label{assumption:1}
For any $\pi \in \Pi$, the Markov chain defined by the transition probability matrix $P^{\pi}$ is irreducible and aperiodic.
\end{assumption}

\begin{rem}
\label{remark-1}
By Assumption \ref{assumption:1}, for any $\pi \in \Pi$, there exists a positive integer $r_{\pi}$ such that, $(P^{\pi})^{r_{\pi}}(s, s') > 0, \forall s, s' \in \mathbb{S}$ where $(P^{\pi})^{r_{\pi}}(s, s')$ denotes the $(s, s')$th element of the matrix $(P^{\pi})^{r_{\pi}}$ \cite[Proposition 1.7, Page 8]{levin2009markov}.
\end{rem}

Define the optimal value function $V^{*}: \mathbb{S} \rightarrow \mathbb{R}_{+}$ as
\begin{equation}
\label{eq:valuefunction1}
V^{*}(s) = \inf_{\pi \in \Pi} \mathbb{E}\left[\sum^{\infty}_{t=0} \gamma^{t} c(X_{t}, \pi(X_{t})) \bigg | X_{0}=s \right].
\end{equation}
Also define the Bellman operator $T: \mathbb{R}^{|\mathbb{S}|}_{+} \rightarrow \mathbb{R}^{|\mathbb{S}|}_{+}$ as
\begin{equation}
T(V)(s) := \min_{a \in \mathbb{A}} \left[ c(s,a)+\gamma\sum_{s^{\prime}}p(s' | s, a)V(s^{\prime}) \right].
\end{equation}
The Bellman operator is a contraction mapping, i.e., $\|T(V) - T(V^{\prime})\|_{\infty} \leq \gamma \|V-V^{\prime}\|_{\infty}$, and the optimal value function $V^{*}$ is the unique fixed point of $T(\cdot)$.  Given the optimal value function, an optimal policy $\pi^{*}$ can be calculated as \cite{Put05}
\begin{equation}
\label{eq:optimalpolicy}
\pi^{*}(s) \in \arg \min_{a \in \mathbb{A}} \left[ c(s,a) +\gamma\sum_{s^{\prime}}p(s' | s, a)V^{*}(s^{\prime})  \right].
\end{equation}

\subsection{Value Iteration, $Q$-Value Iteration}
A standard scheme for   finding the optimal value function (and hence an optimal policy) is \textit{value iteration}. One starts with an arbitrary  function $V_{0}$. At the $k$th iteration, given the current iterate  $V_{k}$, we calculate $V_{k+1}=TV_{k}$.  Since $T(\cdot)$ is a contraction mapping, by Banach fixed point theorem, $V_{k} \rightarrow V^{*}$.

Another way to find the optimal value function  is via $Q$-value iteration. Though this requires  more computation than   the value iteration, $Q$-value iteration is extremely useful in developing learning algorithms for MDPs.

Define the $Q$-value operator $G: \mathbb{R}^{d}_{+} \rightarrow \mathbb{R}^{d}_{+}$ as
\begin{equation}
\label{eq:qvalueoperator1}
G(Q)(s,a) := c(s, a) + \gamma \sum_{s^{\prime} \in \mathbb{S}} p(s' | s, a) \min_{b}Q(s^{\prime},b)
\end{equation}
where $d=|\mathbb{S}||\mathbb{A}|$.
Similar to the Bellman operator $T$, $Q$-value operator $G$ is also a contraction mapping, i.e., $\|G(Q)-G(Q^{\prime})\|_{\infty} \leq \gamma \|Q-Q^{\prime}\|_{\infty}$. Let $Q^{*}$ be the unique fixed point of $G(\cdot)$, i.e.,
\[ Q^{*}(s,a)=c(s, a) + \gamma \sum_{s^{\prime} \in \mathbb{S}} p(s' | s, a) \min_{b}Q^{*}(s^{\prime},b).\]
This $Q^{*}$ is called the optimal $Q$-value. By the uniqueness of $V^{*}$, it is clear that $V^{*} = \min_{a \in \mathbb{A}} Q^{*}(s,a)$. Thus, given $Q^{*}$, one can compute $V^{*}$ and hence an optimal policy $\pi^{*}$.

The standard method to compute  $Q^{*}$  is $Q$-value  iteration. We start with an arbitrary $Q_{0}$ and then update  $Q_{k+1}=G(Q_{k})$. Due to the contraction property of $G$, $Q_{k} \rightarrow Q^{*}$ a.s.

\subsection{Empirical $Q$-Value Iteration for MDPs}
The Bellman operator $T$ and the $Q$-value operator $G$ require the knowledge of  the exact transition kernel $p(\cdot | \cdot, \cdot)$. In practical applications, these transition probabilities may not be readily available, but it may be possible to simulate a transition according to any of these probabilities. Without loss of generality, we assume that the MDP is driven by uniform random noise according to the simulation function
\begin{equation}
\label{eq:simulationfunction}
\psi : \mathbb{S} \times \mathbb{S} \times [0, 1] \rightarrow \mathbb{S}~~\text{such that}~~\text{Pr}(\psi(s, a, \xi)=s^{\prime})=p(s' | s, a)
\end{equation}
where $\xi$ is a  random variable distributed uniformly in  $[0, 1]$. Using this convention, the $Q$-value operator can be written as
\begin{equation}
\label{eq:qvalueoperator2}
G(Q)(s,a) := c(s, a) + \gamma~ \mathbb{E}\left[ \min_{b}Q(\psi(s, a, \xi), b) \right].
\end{equation}

In \textit{empirical $Q$-value iteration} (EQVI) algorithm, we  replace the expectation in the above equation by an empirical estimate. Given a sample of $n$ i.i.d.  random variables distributed uniformly in  $[0, 1]$, denoted $\left\{ \xi_{i}\right\} _{i=1}^{n}$, the empirical estimate of $\mathbb{E}\left[\min_{b}Q(\psi(s, a, \xi), b)\right]$ is $\frac{1}{n}\sum_{i=1}^{n} \min_{b} Q(\psi(s, a, \xi_{i}), b)$. We summarize our \textit{empirical $Q$-value iteration} algorithm below.\\

\begin{algorithm}[!tph]
\caption{: Empirical $Q$-Value Iteration (EQVI) Algorithm}
\label{alg:empiricalQ}
Input: $\widehat{Q}_{0}\in\mathbb{R}^{d}_{+}$, sample size $n\geq1$, maximum iterations $k_{max}$. Set counter $k=0$.
\begin{enumerate}
\item For each $(s, a) \in \mathbb{S} \times \mathbb{A}$, sample $n$ uniformly distributed random variables $\left\{ \xi^{k}_{i}(s, a)\right\} _{i=1}^{n}$,
and compute
\[ \widehat{Q}_{k+1}(s, a) = c(s,a) + \gamma~ \frac{1}{n}\sum^{n}_{i=1} \left( \min_{b} \widehat{Q}_{k} \left(\psi(s, a, \xi^{k}_{i}(s, a)),b\right) \right) \]
\item Increment $k \leftarrow k+1$. If $k > k_{max}$, STOP. Else,  return to Step 1.
\end{enumerate}
\end{algorithm}

We introduce some notation to state our results precisely. Let $(\Omega_{1},\mathcal{F}_{1},\mathbb{P}_{1})$ be the probability space of one-sided infinite sequences $\omega  = (\omega_{k} : k \in \mathbb{Z}^{*} )$, where $\mathbb{Z}^{*}$ is the set of non-negative integers.  Each element $\omega_{k}$ is a vector, $\omega_{k} =(\xi^{k}_{i}(s, a), 1 \leq i \leq  n, s \in \mathbb{S}, a \in \mathbb{A})$, where $\xi^{k}_{i}(s, a)$ is a  random noise   distributed uniformly in $[0, 1]$. We assume that $\xi^{k}_{i}(s, a)$ are i.i.d. $\forall i$, $\forall (s, a) \in \mathbb{S} \times \mathbb{A}$ and $\forall k \in \mathbb{Z}^{*}$. $\mathbb{E}_{1}$ denotes  expectation with respect to  measure $\mathbb{P}_{1}$.

Our main result then is the following.

\begin{thm}
\label{thm:qhat-convergence}
For a given $\omega \in \Omega_{1}$,  let $\widehat{Q}_{k}(\omega), k \geq 0$, be the corresponding $Q$-value iterates as defined in Algorithm \ref{alg:empiricalQ}. Then, there exists a random variable $Q^{*}(\omega)$ such that $\widehat{Q}_{k}(\omega) \rightarrow Q^{*}(\omega), ~\omega-\text{a.s.}$
\end{thm}

{The main idea that we exploit is the fact that (exact) $Q$-value iteration in finite time is equal to finite horizon $Q$-values obtained by ``looking backward'' with the initial guess as the terminal cost. More precisely, when the transition kernels $p(\cdot | \cdot, \cdot)$ are known, the $k$th iterate $Q_{k}$ of the (exact) $Q$-value iteration is obtained via the iteration $Q_{k} = G\left(Q_{k-1}\right)$ (c. f. \eqref{eq:qvalueoperator1}) with an initial guess $Q_{0}$. One can show that this $Q_{k}$ is equal to  $Q^{'}_{k}$ which is the $Q$-value obtained by  ``looking backward'' where
\[Q^{'}_{k}(s, a) = \mathbb{E}\left[\sum^{-1}_{l=-k} \gamma^{l+k}c(X^{'}_{l}, Z^{'}_{l}) + \gamma^{k} Q_{0}(X^{'}_{0}, Z^{'}_{0}) | X^{'}_{-k}=s, Z^{'}_{-k}=a \right]. \]
So, rather than showing that the forward iteration $Q_{k}$ converges to the optimal $Q$-value function $Q^{*}$, one can also establish the convergence of the (exact) $Q$-value iteration by showing that the backward iterate  $Q^{'}_{k}$ converges to  $Q^{*}$ almost surely. When the transition kernels are known, this is obviously a convoluted route because the convergence of the forward iteration $Q_{k+1} = G(Q_{k})$ is immediate by the contraction property of $G$. 

However, when the transition kernels are unknown, it is not clear if we can directly prove the convergence of the (simulation-based) forward iteration sequence $\widehat{Q}_{k}(\omega)$ (given in  Algorithm \ref{alg:empiricalQ} and formalized in equation \eqref{eq:eql-iter1}) to the optimal $Q$-value function $Q^{*}$. To overcome this difficulty, we take the approach mentioned above and define the (simulation-based) backward iteration sequence $\widetilde{Q}_{k}(\omega)$ (c.f. equation \eqref{eq:Qtilde}) similar to the $Q^{'}_{k}$ above and we rigorously show that $\widehat{Q}_{k}(\omega) = \widetilde{Q}_{k}(\omega), \forall \omega $ (c.f. Proposition \ref{thm:qhat-qtilde}). Then, using an approach similar to the well known Propp-Wilson backward simulation algorithm \cite{propp1996exact}, we show that $\widetilde{Q}_{k}$ (and hence $\widehat{Q}_{k}$) converges to a random variable $Q^{*}(\omega)$ almost surely (c.f. Proposition \ref{thm:qhat-convergence}).


We can further establish that the random limit $Q^*(\omega)$ in Theorem \ref{thm:qhat-convergence} is indeed a constant almost surely.
\begin{corollary}
\label{thm:maintheorem}
The empirical $Q$-value iteration converges to the optimal $Q$-value function, i.e., $\widehat{Q}_{k} \rightarrow Q^{*}$ a.s. as $k \rightarrow \infty$ for any fixed $n$.
\end{corollary}

We also provide a rate of convergence, or a non-asymptotic sample complexity bound. This follows from methods that had been developed in \cite{haskell2013empirical} for empirical value and policy iteration, which however only provide a convergence in probability guarantee.  

Let $\widehat{Q}_{k}^n$ be the  $k$th iterate of EQVI when using $n$ samples. Then, 
\begin{thm}
\label{thm:roc} Given $\epsilon \in (0, 1)$ and $\delta \in (0, 1)$, fix $\epsilon_{g}=\epsilon/\eta^{*}$ and select $\delta_{1}, \delta_{2} > 0$ such that $\delta_{1} + 2 \delta_{2} \leq \delta$ where $\eta^{*}=\lceil 2/(1-\gamma) \rceil$. 
Select an $n$ such that
\[n \geq  n(\epsilon, \delta) = \frac{\left(\kappa^{*}\right)^{2}}{2\epsilon_{g}^{2}} \log \frac{ 2|\mathbb{S}| | \mathbb{A}| } {\delta_{1}}\]
where $\kappa^{*} = \max_{(s,a) \in \mathbb{K}} c(s,a)/(1-\gamma)$ and select a $k$ such that
\[ k \geq k(\epsilon, \delta)  = \log\left(\frac{1}{\delta_{2}\,\mu_{n,\,\min}}\right). \]
Then,
\[
\mathbb{P}_{1}\left( \|\widehat{Q}_{k}^n - Q^{*}\| \geq \epsilon\right) \leq \delta.
\]
Here $\mu_{n,min} = \min_{i} \mu_{n}(i)$ and   $\mu_{n}(i)$ is given by
\begin{align*}
\mu_{n}\left(\eta^{*}\right)=\,~p_{n}^{N^{*}-\eta^{*}-1}, ~~~&\mu_{n}\left(N^{*}\right)=\,  \frac{1-p_{n}}{p_{n}},~  \\
\mu_{n}\left(i\right)=\,  \left(1-p_{n}\right)p_{n}^{\left(N^{*}-i-1\right)},~~ & \forall i=\eta^{*}+1,\ldots,N^{*}-1, \\
p_{n} = 1 - 2 |\mathbb{S}| |\mathbb{A}| e^{-2 (\epsilon/\gamma)^{2}n/((\kappa^{*})^{2})}, & ~~~ N^{*} = \left\lceil \frac{\kappa^{*}}{\epsilon_{g}}\right\rceil.  ~~~
\end{align*}

\end{thm}

}

\subsection{Comparison with Classical $Q$-learning}
Synchronous variant of the  classical $Q$-learning algorithm  for discounted MDPs works as follows (see \cite[Section 5.6]{Bertsekas_Neuro_1996}). For every state-action pair $(s, a) \in \mathbb{S} \times \mathbb{A}$, we maintain a $Q$-value function and use the update rule
\begin{equation}
\label{eq:classicalQL}
Q_{k+1}\left(s,a\right)=Q_{k}\left(s,a\right)+\alpha_{k}\left(c\left(s,a\right)+\gamma\,\min_{b \in \mathbb{A}} Q_{k}\left(\psi(s, a, \xi^{k}(s, a)), b\right) - Q_{k}\left(s,a\right) \right)
\end{equation}
where $\xi^{k}(s, a)$ is a random noise sampled uniformly from $[0, 1]$ and $\{\alpha_{k}, k\geq 0\}$ is the standard stochastic approximation step sequence such that $\sum_{k} \alpha_{k} = \infty$ and $\sum_{k} \alpha^{2}_{k} < \infty$. It can be shown that $Q_{k} \rightarrow Q^{*}$ almost surely \cite{Bertsekas_Neuro_1996}. The rate of convergence depends on the sequence $\{\alpha_{k}, k \geq 0\}$  \cite{Borkar08book}. In general, the convergence is very slow.

\textit{Empirical $Q$-value iteration} algorithm does not use  stochastic approximation, and is a non-incremental scheme. The rate of convergence will depend on the number of noise samples $n$.

\section{Proof of Theorem \ref{thm:qhat-convergence}}
\label{sec:analysis}

In the following, we first formally set the notations for the underlying probability space and define EQVI iterate $\widehat{Q}_{k}$ using those notations (c.f. \eqref{eq:eql-iter1}). Then we define the forward simulation model for controlled Markov chains (c.f. equation \eqref{eq:simfunc-2-compos1}), and show the finite time coupling property of this simulated chain (c.f. Proposition \ref{thm:finite-coupling-sigma}). Then we define the backward simulation model for controlled Markov chain (c.f. \ref{eq:simfunc-2-compos3}). Equipped with these notions, we proceed to prove Proposition \ref{thm:qhat-qtilde}. Finally we will give the proof for the main results Theorem \ref{thm:qhat-convergence} and for the corollary

Let $(\Omega_{1},\mathcal{F}_{1},\mathbb{P}_{1})$ be the probability space of one-sided infinite sequences $\omega$ such that $\omega  = \{\omega_{k} : k \in \mathbb{Z}^{*} \}$, where $\mathbb{Z}^{*}$ is the set of non-negative integers.  Each element $\omega_{k}$ of the sequence is a vector $\omega_{k} =(\xi^{k}_{i}(s, a), 1 \leq i \leq  n, s \in \mathbb{S}, a \in \mathbb{A})$, where $\xi^{k}_{i}(s, a)$ is a  random noise   distributed uniformly in $[0, 1]$. We assume that $\xi^{k}_{i}(s, a)$ are i.i.d. $\forall i$, $\forall (s, a) \in \mathbb{S} \times \mathbb{A}$ and $\forall k \in \mathbb{Z}^{*}$. $\mathbb{E}_{1}$ denotes  expectation with respect to  measure $\mathbb{P}_{1}$.

For each $k \in \mathbb{Z}^{*}$, $\theta_{k}$ denotes the left shift operator, i.e.,
\begin{equation}
\label{eq:shiftoperator}
\theta_{k}\omega := \{\omega_{\tau+k} : \tau \geq 0\}.
\end{equation}
Also, let $\Gamma$ be the projection operator such that $\Gamma (\theta_{k}\omega) = \omega_{k}, \forall k \in \mathbb{Z}^{*}$, $\forall \omega \in \Omega_{1}$.
Recall that  $\psi$ is the simulation function defined in equation \eqref{eq:simulationfunction} such that
\begin{equation}
\label{eq:simulationfunction1}
\mathbb{P}_{1}(\psi(s, a, \xi^{k}_{i}(s, a)) = s^{\prime}) = p(s'|s, a), ~~\forall i, k.
\end{equation}
Using $\psi$, for each $\omega \in \Omega_{1}$ we define a sequence of  empirical transition kernels $\widehat{p}(\omega) = (\widehat{p}_{k}(\omega_{k}))_{k \geq 0}$ as
\begin{equation}
\label{eq:phat-defn}
\widehat{p}_{k}(s^{\prime}|s,a) := \frac{1}{n}\sum^{n}_{i=1} I\{\psi(s, a, \xi^{k}_{i}(s, a)) = s^{\prime}\}.
\end{equation}
We dropped  $\omega_{k}$ from the above definition for ease of notation. For any $\pi \in \Pi$, we also define the transition probability matrix $\widehat{P}^{\pi}_{k}$ as,
\begin{equation}
\label{eq:phatpi}
\widehat{P}^{\pi}_{k}(s, s') := \sum_{a \in \mathbb{A}} \widehat{p}_{k}(s' | s, a) \pi(s, a).
\end{equation}
Note that the rows of $\widehat{P}^{\pi}_{k}$ are independent due to the independence assumption on the elements of the vector $\omega_{k}$.  Also, $\widehat{P}^{\pi}_{k}$ are independent $\forall k$.

We define the \textit{empirical $Q$-value operator}  $\widehat{G}: \Omega_{1} \times \mathbb{R}^{d}_{+} \rightarrow \mathbb{R}^{d}_{+}$ as
\begin{align}
\label{eq:empirical-q-operator1}
\widehat{G}(\theta_{k}\omega, Q)(s, a)  &:=  \breve{G}_{n}(\Gamma(\theta_{k}\omega), Q)(s,a) \nonumber \\
&:= c(s,a) + \gamma \frac{1}{n} \sum^{n}_{i=1} \min_{b} Q(\psi(s,a,\xi^{k}_{i}(s, a)),b) \nonumber \\
&= c(s,a) + \gamma~ \sum_{s^{\prime}} \widehat{p}_{k}(s^{\prime}|s,a) \min_{b} Q(s^{\prime},b).
\end{align}
Then, the empirical $Q$-value iteration  given in Algorithm \ref{alg:empiricalQ} can be succinctly represented as
\begin{equation}
\label{eq:eql-iter1}
\widehat{Q}_{k+1}(\omega) =  \widehat{G}(\theta_{k}\omega, \widehat{Q}_{k}(\omega)).
\end{equation}
We  drop  $\omega$ from the notation of $\widehat{Q}_{k}$ whenever it is not necessary.

Note that from equation \eqref{eq:simulationfunction1} and \eqref{eq:empirical-q-operator1},  for any fixed $Q$,
\begin{equation}
\label{eq:expectation-T-hat}
\mathbb{E}_{1}\left[\widehat{G}(\theta_{k}\omega, Q)  \right] = G(Q), ~\forall k \in \mathbb{Z}^{*},
\end{equation}
where $G$ is the $Q$-value operator defined in equation \eqref{eq:qvalueoperator1}.

We define another probability space $(\Omega_{2} = \Omega_2'\times\Omega_2'', \mathcal{F}_{2}, \mathbb{P}_{2})$ of one-sided infinite sequences $\mu$ such that $\mu = \{(\nu_k, \tilde{\nu}_k), k \in \mathbb{Z}^*\}$. Here $\nu  = \{\nu_{k} : k \in \mathbb{Z}^{*}\} \in \Omega_2'$. Each element $\nu_{k}$ of the sequence $\nu$ is a $|\mathbb{S}| |\mathbb{A}|$-dimensional vector,  $\nu_{k} =  \left(\nu_{k}(s, a), s \in \mathbb{S}, a \in \mathbb{A}\right)$ where $\nu_{k}(s, a)$ is a random variable distributed uniformly in $[0, 1]$. We assume that $\nu_{k}(s, a)$ are i.i.d. $\forall (s, a) \in \mathbb{S} \times \mathbb{A}$ and $\forall k \in \mathbb{Z}^{*}$. Likewise, let $\tilde{\nu}= \{\tilde{\nu}_k : k  \in \mathbb{Z}^{*}\} \in \Omega_2''$. Each element $\tilde{\nu}_{k}$ of the sequence $\tilde{\nu}$ is a $|\mathbb{S}|$-dimensional vector,  $\tilde{\nu}_{k} =  \left(\tilde{\nu}_{k}(s), s \in \mathbb{S}\right),$ where $\tilde{\nu}_{k}(s)$ is a random variable distributed uniformly in $[0, 1]$. We assume that $\tilde{\nu}_{k}(s)$ are i.i.d., independent of $\nu$, $\forall s \in \mathbb{S}$ and $\forall k \in \mathbb{Z}^{*}$. $\mathbb{E}_{2}$ denotes the expectation with respect to $\mathbb{P}_{2}$.  Let $\mathbb{P}$ be the product measure, $\mathbb{P} =  \mathbb{P}_{1} \otimes \mathbb{P}_{2}$ and let $\mathbb{E}$ denote the expectation with respect to $\mathbb{P}$.

For each  $\omega \in \Omega_{1}$, i.e., for each sequence of transition kernels $\widehat{p}(\omega) = (\widehat{p}_{k}(\omega_{k}))_{k \geq 0}$, we define a sequence of simulation functions $\left(\phi_{k} = (\phi_{k}^{1}, \phi^{2}_{k})\right)_{k \geq 0}$ as,
\begin{align}
\label{eq:simfunc-2-1}
\phi^{1}_{k}:&~ \mathbb{S} \times \mathbb{A} \times \Omega_{2}' \rightarrow \mathbb{S} \\
\label{eq:simfunc-2-2}
\phi^{2}_{k}:&~ \mathbb{S} \times \Omega_{2}'' \rightarrow \mathbb{A}
\end{align}
such that
\begin{equation}
\mathbb{P}_{2}\left(\phi^{1}_{k}(s, a, \nu_{k}(s, a))=s^{\prime}\right) ~= ~ \widehat{p}_{k}(s' | s, a)
\end{equation}
and $\phi^{2}_{k}$ is the (randomized) control strategy that maps the output of the function $\phi^{1}_{k}$ to an action space-valued random variable $\phi^2_k(\phi^1_k(s, a, \nu_k(s, a)), \tilde{\nu}_k(\phi^1_k(s, a, \nu_k(s, a))))$. We note that the control strategy can be identified with an element $\pi$, resp.\ $\sigma$, of the set $\Pi$ or $\Sigma$, when, resp.,
\begin{displaymath}
\mathbb{P}_2(\phi^2_k(s, \tilde{\nu}(s)) = a) = \pi(s,a) \ ~\mbox{or} ~\mathbb{P}_2( \phi^2_k(s, \tilde{\nu}(s)) = a) = \sigma_k(s,a).
 \end{displaymath}
 In such a case, we write $\phi^2_k \approx \pi$ or $\phi^2_k \approx \sigma_k$ as the case may be.

For $k_{2} > k_{1}$, define the composition function $\Phi^{k_{2}}_{k_{1}}$ as
\begin{equation}
\label{eq:simfunc-2-compos1}
\Phi^{k_{2}}_{k_{1}} := \phi_{k_{2}-1} \circ \phi_{k_{2}-2} \circ \cdots \circ \phi_{k_{1}}.
\end{equation}
Given an $\omega \in \Omega_{1}, \nu \in \Omega_{2}$ and an initial condition $(s_{0}, a_{0})$,  we can \textit{simulate} an MDP with state-action sequence  $(X_{k}(\omega, \nu) , Z_{k}(\omega, \nu))_{k \geq 0}$  as follows:
\begin{equation}
\label{eq:simfunc-2-compos2}
\left(X_{k}(\omega, \nu), Z_{k}(\omega, \nu)\right)  = \Phi^{k}_{0}(s_{0}, a_{0})~~\text{and}~~ \left(X_{k+1}(\omega, \nu), Z_{k+1}(\omega, \nu)\right) =\phi_{k} \circ  \Phi^{k}_{0}(s_{0}, a_{0})
\end{equation}
We call this simulation method as \textit{forward simulation}.
The dependence on the control strategy  $\phi^{2}_{k}$ is implicit and is not used in the notation. Whenever not necessary, we also drop $\omega$ and $\nu$ from the notation and denote the simulated chain by $\left(X_{k}, Z_{k}\right)$.  Since
\[\mathbb{P}_{2}(X_{k+1}|X_{m}, Z_{m}, m \leq k) = \widehat{p}_{k}(X_{k+1}|X_{k}, Z_{k}), \]
the sequence $\left(X_{k}(\omega, \nu)\right)_{k \geq 0}$ is  a  \textit{controlled Markov chain}.

Consider two  controlled Markov chains $X^{1}_{k}(\omega, \nu), X^{2}_{k}(\omega', \nu'), k \geq k_{0}$, with different initial conditions, defined on $(\Omega\times\Omega',\mathcal{F}\times\mathcal{F}',\mathbb{P}\times\mathbb{P}')$ where  $(\Omega',\mathcal{F}',\mathbb{P}')$ is another copy of  $(\Omega,\mathcal{F},\mathbb{P})$. Define the  \textit{coupling time},  $\widetilde{\tau}_{\omega^*, \nu^*}$, for $\omega^* := (\omega, \omega'), \nu^* := (\nu, \nu')$, as $\widetilde{\tau}_{\omega^*, \nu^*}(s^{1}_{0}, s^{2}_{0}) := $
\begin{align}
\label{eq:defn-couplingtime}
\min \left\{ m \geq 0: X^{1}_{k_{0}+m}(\omega, \nu) =X^{2}_{k_{0}+m}(\omega', \nu'), 
X^{1}_{k_{0}}(\omega, \nu)=s^{1}_{0}, X^{2}_{k_{0}}(\omega', \nu')=s^{2}_{0} \right\}.
\end{align}
We prove that the expected value of the coupling time is finite.

\begin{prop}
\label{thm:finite-coupling-sigma}
Let $(X^{1}_{k}(\omega, \nu), Z^{1}_{k})_{k \geq k_{0}}, (X^{2}_{k}(\omega', \nu'), Z^{2}_{k})_{k \geq k_{0}}$ be two sequences of state-action pairs for an MDP simulated according to \eqref{eq:simfunc-2-compos2} using an arbitrary control strategy $\phi^{2}_{k} \approx \sigma_{k}$.  Let  $\widetilde{\tau}_{\omega^*, \nu^*}$ be the \textit{coupling time} as defined in equation \eqref{eq:defn-couplingtime}.  Then,
\[\mathbb{E}\left[\widetilde{\tau}_{\omega^*, \nu^*}(s^{1}_{0}, s^{2}_{0})\right]  < \infty, \ \ \forall s^{1}_{0}, s^{2}_{0}  \in \mathbb{S}.  \]
\end{prop}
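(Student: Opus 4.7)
The plan is to integrate out the inner randomness to reduce the problem to two independent time-inhomogeneous Markov chains on $\mathbb{S}$ driven by the \emph{true} kernel $p$, and then show that the product chain on $\mathbb{S}^{2}$ hits the diagonal in finite expected time via a Doeblin-type coupling bound based on Assumption~\ref{assumption:1}. First, observe that since $(\omega_{k})$, $(\nu_{k})$, $(\tilde\nu_{k})$ and their primed counterparts are mutually independent across $k$ and \eqref{eq:simulationfunction1} gives $\mathbb{E}_{1}[\widehat p_{k}(s'\mid s,a)] = p(s'\mid s,a)$, one has $\mathbb{P}(X^{1}_{k+1}=s'\mid X^{1}_{k}=s,\text{past}) = P^{\sigma_{k}}(s,s')$ (with $P^{\sigma_{k}}$ as in \eqref{eq:ppi}), and likewise for chain~$2$ with independent randomness. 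Hence $\widetilde\tau$ equals the hitting time of the diagonal $D = \{(s,s):s\in\mathbb{S}\}$ by the independent product chain on $\mathbb{S}^{2}$ whose time-$k$ kernel is $Q_{k}((s_{1},s_{2}),(s'_{1},s'_{2})) = P^{\sigma_{k}}(s_{1},s'_{1})\,P^{\sigma_{k}}(s_{2},s'_{2})$.

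The heart of the argument is a uniform Doeblin-type bound: for some $N \geq 1$ and $\eta > 0$ depending only on $(p, \mathbb{S}, \mathbb{A})$, and every $(s_{1},s_{2}) \in \mathbb{S}^{2}$ and every $\sigma_{0},\dots,\sigma_{N-1} \in \Pi$,
\[
\mathbb{P}\bigl(X^{1}_{N} = X^{2}_{N}\bigm| X^{1}_{0}=s_{1},\,X^{2}_{0}=s_{2}\bigr) \geq \eta.
\]
Once this is in hand, applying the strong Markov property of the product chain at times $N,2N,\dots$ yields $\mathbb{P}(\widetilde\tau > kN) \leq (1-\eta)^{k}$ and hence $\mathbb{E}[\widetilde\tau] \leq N/\eta < \infty$. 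To establish the bound, fix any target $s^{*}\in\mathbb{S}$; by independence of the two chains the meeting-at-$s^{*}$ probability is exactly $(P^{\sigma_{0}}\cdots P^{\sigma_{N-1}})(s_{1},s^{*})\,(P^{\sigma_{0}}\cdots P^{\sigma_{N-1}})(s_{2},s^{*})$, so it suffices to lower-bound $(P^{\sigma_{0}}\cdots P^{\sigma_{N-1}})(s,s^{*})$ uniformly over $s$, $s^{*}$, and admissible policy sequences.

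For that uniform reachability bound I would combine two ingredients. (i) \emph{Compactness/continuity:} $\Pi$ is compact, $\pi\mapsto(P^{\pi})^{r}$ is continuous for each $r$, and by Assumption~\ref{assumption:1} together with Remark~\ref{remark-1} each $(P^{\pi})^{r_{\pi}}$ is entrywise positive; so $r_{\pi}$ is upper semicontinuous in $\pi$, $R := \sup_{\pi \in \Pi} r_{\pi} < \infty$, and $\tilde\epsilon := \inf_{\pi \in \Pi}\min_{s,s'}(P^{\pi})^{R}(s,s') > 0$, which settles the stationary case $\sigma_{k}\equiv\pi$. (ii) \emph{Reduction to deterministic policies:} for each $\sigma \in \Pi$, the modal deterministic policy $\pi_{d}(\sigma)(s) := \arg\max_{a}\sigma(s,a)$ satisfies $\sigma(s,\pi_{d}(\sigma)(s)) \geq 1/|\mathbb{A}|$, so $P^{\sigma} \geq |\mathbb{A}|^{-1} P^{\pi_{d}(\sigma)}$ entrywise; this reduces the task to lower-bounding products $P^{\pi_{d}^{(0)}}\cdots P^{\pi_{d}^{(N-1)}}$ over sequences of deterministic policies. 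Choosing $N$ as a large enough multiple of $R\cdot|\mathbb{A}|^{|\mathbb{S}|}$, pigeonhole ensures some $\pi_{d}^{\star}$ appears at least $R$ times in the sequence; one then lower-bounds the product by isolating $\pi_{d}^{\star}$-factors via (i) and absorbing the remaining factors using row stochasticity.

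The main obstacle I expect is executing (ii) cleanly: an adversarial $\sigma_{k}$ can route probability mass away from $s^{*}$ in the gaps between repeated occurrences of $\pi_{d}^{\star}$, so the naive pigeonhole-plus-row-stochasticity sketch may need to be replaced or augmented by weak-ergodicity machinery for non-homogeneous chains (Dobrushin/Hajnal-type scrambling bounds), trading the single-target reachability property for contraction of the Dobrushin coefficient of a composite kernel. This is where the bulk of the technical effort is likely to sit.
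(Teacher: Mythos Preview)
Your opening reduction---integrating out the $\omega$-randomness so that under the full product measure the pair $(X^1,X^2)$ becomes two independent time-inhomogeneous chains on $\mathbb{S}$ with one-step kernels $P^{\sigma_k}$---is correct and is in fact cleaner than the paper's route, which carries the empirical kernels $\widehat p_k$ explicitly throughout (arguing that $\mathbb{E}_1[W^\pi]>0$ for the product of $\widehat P^\pi_k$'s along a chosen path and hence $\mathbb{P}_1(W^\pi>\epsilon)>\delta$). After your reduction, however, both approaches face the same crux: a lower bound on the $N$-step reachability probability that is uniform over the starting state \emph{and} over the non-stationary policy sequence.

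You correctly flag (ii) as the sticking point, and indeed the pigeonhole sketch does not close. Even if some deterministic $\pi_d^\star$ recurs $R$ times among $\pi_d^{(0)},\dots,\pi_d^{(N-1)}$, the intervening stochastic factors can funnel all the mass of a given row into a single state from which the next $\pi_d^\star$-step has zero one-step probability toward $s^*$; row stochasticity gives you nothing entrywise, and the $R$ occurrences of $\pi_d^\star$ need not be contiguous, so you cannot invoke $(P^{\pi_d^\star})^R>0$. The Dobrushin fallback hits the same wall at the single-step level: Assumption~\ref{assumption:1} does not force the ergodic coefficient of an individual $P^\sigma$ below $1$, only that of a sufficiently high power of a \emph{fixed} $P^\pi$, so a block argument runs into the same non-consecutiveness problem. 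The paper (following \cite[Lemma~1.1, p.~42]{borkar1991topics}) sidesteps this by a soft compactness argument: if no uniform $(r^*,\epsilon)$ existed, take strategies $\sigma^\alpha$ with $\mathbb{P}(\tau^\alpha(s,s')>\alpha)>1-1/\alpha$, pass to a subsequential limit via tightness and Skorohod, and obtain a limiting controlled chain that \emph{never} reaches $s'$; this forces a proper absorbing set $H\subset\mathbb{S}\setminus\{s'\}$ under some stationary deterministic policy, contradicting Assumption~\ref{assumption:1}. After your first-paragraph reduction this compactness lemma applies verbatim to the true-kernel chain and delivers exactly the uniform Doeblin bound you need; that, rather than a constructive scrambling estimate, is the missing ingredient.
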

Proof is given in the Appendix. 

We now consider the \textit{backward simulation} of an MDP. This is similar to the \textit{coupling from the past} idea introduced in \cite{propp1996exact}. Note that for us this is a proof technique, a `thought experiment', and not the actual algorithm.

Given  $\omega \in \Omega_{1}, \nu \in \Omega_{2}$, the sequence of simulation functions $(\phi_{k}=(\phi^{1}_{k}, \phi^{2}_{k}))$, a  $k_{0} > 0$, and an initial  condition $\widetilde{X}_{-k_{0}}(\omega, \nu) = s_{0}, \widetilde{Z}_{-k_{0}}(\omega, \nu) = a_{0}$, we simulate a controlled Markov chain  $(\widetilde{X}_{m}(\omega, \nu))^{0}_{m=-k_{0}}$ of length $k_{0}+1$ using the backward simulation. As a first step, we do an offline computation of all  possible simulation trajectories as follows:\\
\begin{enumerate}
\item Input $k_{0}$. Initialize $m=-1$.
\item Compute $\widetilde{\phi}^{1}_{m}(s, a, \nu_{-m}(s, a)) := {\phi}^{1}_{-m}(s, a, \nu_{-m}(s, a))$, $\forall (s, a) \in \mathbb{S} \times \mathbb{A}$.
\item $m \leftarrow m-1$. If $m < -k_{0}$, stop. Else, return to step 2.
\end{enumerate}

Then we simulate  $(\widetilde{X}_{m}(\omega, \nu))^{0}_{m=-k_{0}+1}$ as,
\begin{align}
\label{eq:phi-sim-1}
\widetilde{X}_{m} &= \widetilde{\phi}^{1}_{m}(\widetilde{X}_{m-1}, \widetilde{Z}_{m-1}, \nu_{-(m-1)}(\widetilde{X}_{m-1}, \widetilde{Z}_{m-1})) ,\\
\label{eq:phi-sim-2}
\widetilde{Z}_{m} & = \widetilde{\phi}^{2}_{m}(\widetilde{X}_{m}, \tilde{\nu}_{-m}(\widetilde{X}_m)) :=  {\phi}^{2}_{-m}(\widetilde{X}_{m}, \tilde{\nu}_{-m}(\widetilde{X}_m)),
\end{align}
starting from the initial condition $\widetilde{X}_{-k_{0}}(\omega, \nu) = s_{0}, \widetilde{Z}_{-k_{0}}(\omega, \nu) = a_{0}$. We define the composition function as
\begin{equation}
\label{eq:simfunc-2-compos3}
\widetilde{\Phi}^{0}_{-k_{0}} := \widetilde{\phi}_{0} \circ \widetilde{\phi}_{-1} \circ \cdots \circ \widetilde{\phi}_{-k_{0}+2} \circ \widetilde{\phi}_{-k_{0}+1}
\end{equation}
where $\widetilde{\phi}_{m}=(\widetilde{\phi}^{1}_{m},\widetilde{\phi}^{2}_{m})$.

Recall that (c.f. \eqref{eq:simfunc-2-compos2}) in the forward simulation starting from $k=0$, we go from a path of length $k_{0}$ to a path of length $k_{0}+1$ by taking the composition $\phi_{k_{0}} \circ  \Phi^{k_{0}}_{0}(s_{0}, a_{0})$. In backward simulation, we do this by taking the composition $\widetilde{\Phi}^{0}_{-k_{0}+1} \circ \widetilde{\phi}_{-k_{0}}(s_{0}, a_{0})$. So, forward simulation is done by forward composition of simulation functions whereas the backward simulation is done by backward composition of the simulation functions. Furthermore, in forward simulation, we can successively generate consecutive states of a single controlled Markov chain trajectory one transition at a time, whereas in backward simulation one is obliged to generate one transition per state and any trajectory from $-k_0$ to $0$ has to be traced out of this collection by choosing contiguous state transitions at each successive time. This feature is familiar from the Propp-Wilson backward simulation algorithm mentioned above.


In the following, we fix the control strategy $\phi^{2}_{k}$ as,
\begin{equation}
\phi^{2}_{k}(s) = \arg \min \widetilde{Q}_{k}(s, \cdot),  \forall s,
\end{equation}
where $\widetilde{Q}_{k}$ is defined as,
\begin{equation}
\label{eq:Qtilde}
\widetilde{Q}_{k}(s, a) := \mathbb{E}_{2}\left[\sum^{-1}_{l=-k} \gamma^{l+k}c(\widetilde{X}_{l}, \widetilde{Z}_{l}) + \gamma^{k} \widetilde{Q}_{0}(\widetilde{X}_{0}, \widetilde{Z}_{0}) | \widetilde{X}_{-k}=s, \widetilde{Z}_{-k}=a \right]
\end{equation}
and $\widetilde{Q}_{0}(\cdot, \cdot)  = h(\cdot, \cdot)$ for any bounded function $h : \mathbb{S} \times \mathbb{A} \rightarrow \mathbb{R}_{+}$.
Note that the expectation in the above equation is with respect to the measure $\mathbb{P}_{2}$ for a given $\omega$ (i.e., for a given sequence of transition kernels  $(\widehat{p}_{k}(\omega_{k}))_{k \geq 0}$.

We now show an important connection between the $\widetilde{Q}_{k}$ iterate defined above and the
empirical $Q$-value iterate $\widehat{Q}_{k}$.
\begin{prop}
\label{thm:qhat-qtilde}
Let $\widehat{Q}_{0}(\cdot, \cdot) =  \widetilde{Q}_{0}(\cdot, \cdot)  = h(\cdot, \cdot)$ for any bounded function $h : \mathbb{S} \times \mathbb{A} \rightarrow \mathbb{R}_{+}$. Then, $\widehat{Q}_{k} =  \widetilde{Q}_{k}$ for all $k \geq 0$.
\end{prop}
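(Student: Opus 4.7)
My plan is to prove the identity by induction on $k$. The base case $k=0$ is immediate from the hypothesis $\widehat{Q}_0 = \widetilde{Q}_0 = h$. For the inductive step, I assume $\widetilde{Q}_j = \widehat{Q}_j$ for all $j \le k$ and derive $\widetilde{Q}_{k+1} = \widehat{Q}_{k+1}$.

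First I would peel off the $l = -(k+1)$ term from \eqref{eq:Qtilde}, producing
\begin{equation*}
\widetilde{Q}_{k+1}(s,a) = c(s,a) + \gamma\, \mathbb{E}_2\!\left[\sum_{l=-k}^{-1}\! \gamma^{l+k} c(\widetilde{X}_l, \widetilde{Z}_l) + \gamma^{k} h(\widetilde{X}_0, \widetilde{Z}_0) \,\Big|\, \widetilde{X}_{-(k+1)}=s,\, \widetilde{Z}_{-(k+1)}=a\right],
\end{equation*}
and then conditioning on $(\widetilde{X}_{-k}, \widetilde{Z}_{-k})$. The backward-simulation recipe \eqref{eq:phi-sim-1}--\eqref{eq:phi-sim-2} together with the identity $\widetilde{\phi}_m = \phi_{-m}$ says that the transition from time $-(k+1)$ to time $-k$ is governed by the empirical kernel $\widehat{p}_k(\cdot\,|\,s,a)$, and that the action at time $-k$ is the deterministic choice $\widetilde{Z}_{-k} = \arg\min_{b} \widetilde{Q}_k(\widetilde{X}_{-k},b)$, since $\phi^2_k(s') = \arg\min \widetilde{Q}_k(s',\cdot)$.

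The pivotal observation is then that the Markov property of the backward chain, combined with the consistency of the simulation functions across different windows (equations \eqref{eq:phi-sim-1}--\eqref{eq:simfunc-2-compos3} reuse the same $\widetilde{\phi}_{-k+1}, \ldots, \widetilde{\phi}_0$ for both the length-$(k+1)$ and length-$k$ chains), implies that the conditional distribution of the tail $\{(\widetilde{X}_l, \widetilde{Z}_l)\}_{l=-k}^{0}$ given $(\widetilde{X}_{-k}, \widetilde{Z}_{-k}) = (s', b)$ coincides with the distribution used to define $\widetilde{Q}_k(s', b)$. Hence the inner conditional expectation reduces to $\widetilde{Q}_k\bigl(s', \arg\min_b \widetilde{Q}_k(s', b)\bigr) = \min_b \widetilde{Q}_k(s', b)$, yielding
\begin{equation*}
\widetilde{Q}_{k+1}(s,a) = c(s,a) + \gamma \sum_{s'} \widehat{p}_k(s'\,|\,s,a) \min_b \widetilde{Q}_k(s', b).
\end{equation*}

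The final step is to apply the inductive hypothesis $\widetilde{Q}_k = \widehat{Q}_k$ on the right-hand side, at which point the expression matches $\widehat{G}(\theta_k\omega, \widehat{Q}_k)(s,a)$ by \eqref{eq:empirical-q-operator1}, and hence equals $\widehat{Q}_{k+1}(s,a)$ by \eqref{eq:eql-iter1}. The main obstacle will be the bookkeeping of the time-reversed indexing: one must verify carefully that the kernel $\widehat{p}_k$ really enters at the first step of the length-$(k+1)$ backward chain while the remaining kernels $\widehat{p}_{k-1}, \ldots, \widehat{p}_0$ and the policy rules $\phi^2_{k-1}, \ldots, \phi^2_0$ agree exactly with those of the length-$k$ backward chain defining $\widetilde{Q}_k$. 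Once this shift-compatibility is made explicit, what remains is the standard one-step dynamic-programming unfolding.
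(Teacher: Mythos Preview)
Your proposal is correct and follows essentially the same route as the paper's own proof: induction on $k$, peel off the first cost term, condition on the next state-action pair, recognize the residual conditional expectation as $\widetilde{Q}$ of one shorter horizon via the Markov property and the greedy choice $\widetilde{Z}_{-k}=\arg\min_b\widetilde{Q}_k(\widetilde{X}_{-k},b)$, and then invoke the induction hypothesis to match \eqref{eq:empirical-q-operator1}--\eqref{eq:eql-iter1}. The only cosmetic differences are that the paper indexes the step as $k-1\mapsto k$ (and writes out the $k=1$ case explicitly) whereas you index it as $k\mapsto k+1$, and you are somewhat more explicit than the paper about the ``shift-compatibility'' of the kernels $\widehat{p}_{k},\widehat{p}_{k-1},\ldots,\widehat{p}_0$ across the two backward windows.
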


\begin{proof}
We prove this by induction. First note that by the definition of $\widehat{Q}_{k}$ given in equation \eqref{eq:eql-iter1}, for all $(s, a) \in \mathbb{S} \times \mathbb{A}$, we get
\begin{align*}
\widehat{Q}_{0}(s, a) &= h(s, a), \\
\widehat{Q}_{1}(s, a) &= c(s,a) + \gamma \sum_{s^{\prime}} \widehat{p}_{0}(s^{\prime}|s,a) \min_{b} h(s^{\prime},b).
\end{align*}
Now, by the definition in equation \eqref{eq:Qtilde},
\begin{align*}
\widetilde{Q}_{0}(s, a) &= \mathbb{E}_{2}\left[h(\widetilde{X}_{0}, \widetilde{Z}_{0}) | \widetilde{X}_{0}=s, \widetilde{Z}_{0}=a \right] = h(s, a), \\
\widetilde{Q}_{1}(s, a) &= \mathbb{E}_{2}\left[c(\widetilde{X}_{-1}, \widetilde{Z}_{-1}) + \gamma~ \widetilde{Q}_{0}(\widetilde{X}_{0}, \widetilde{Z}_{0}) | \widetilde{X}_{-1}=s, \widetilde{Z}_{-1}=a \right]\\
&=  c(s,a) + \gamma \mathbb{E}_{2}\left[\widetilde{Q}_{0}(\phi_{0}(\widetilde{X}_{-1}, \widetilde{Z}_{-1}, \nu_{1}), \widetilde{Z}_{0}) | \widetilde{X}_{-1}=s, \widetilde{Z}_{-1}=a \right],
\end{align*}
where $\widetilde{Z}_{0} = \arg \min \widetilde{Q}_{0}(\phi_{0}(\widetilde{X}_{-1}, \widetilde{Z}_{-1}, \nu_{1}), \cdot)$. Then,
\begin{align*}
\widetilde{Q}_{1}(s, a) &= c(s,a) +  \gamma \sum_{s^{\prime}} \widehat{p}_{0}(s^{\prime}|s,a) \min_{b} h(s^{\prime},b),
\end{align*}
where we used the fact that $\widetilde{Q}_{0} = h$.  

Now, assume that  $\widehat{Q}_{m} =  \widetilde{Q}_{m}$ for all $m \leq k-1$. Then,
\begin{align*}
\widetilde{Q}_{k}(s, a) 
&= \mathbb{E}_{2}\left[\sum^{-1}_{l=-k} \gamma^{l+k}c(\widetilde{X}_{l}, \widetilde{Z}_{l}) + \gamma^{k} \widetilde{Q}_{0}(\widetilde{X}_{0}, \widetilde{Z}_{0}) | \widetilde{X}_{-k}=s, \widetilde{Z}_{-k}=a \right] \\
&= c(s, a) + \mathbb{E}_{2}\left[\sum^{-1}_{l=-k+1} \gamma^{l+k}c(\widetilde{X}_{l}, \widetilde{Z}_{l}) + \gamma^{k} \widetilde{Q}_{0}(\widetilde{X}_{0}, \widetilde{Z}_{0}) | \widetilde{X}_{-k}=s, \widetilde{Z}_{-k}=a \right] \\
&= c(s, a) \\
&~~+ \gamma ~\mathbb{E}_{2}\left[\sum^{-1}_{l=-k+1} \gamma^{l+k-1}c(\widetilde{X}_{l}, \widetilde{Z}_{l}) + \gamma^{k-1} \widetilde{Q}_{0}(\widetilde{X}_{0}, \widetilde{Z}_{0}) | \widetilde{X}_{-k}=s, \widetilde{Z}_{-k}=a \right]\\
&= c(s, a) + \gamma ~\mathbb{E}_{2}\left[\widetilde{Q}_{k-1}(\widetilde{X}_{-k+1}, \widetilde{Z}_{-k+1})   | \widetilde{X}_{-k}=s, \widetilde{Z}_{-k}=a \right]\\
&= c(s, a) + \gamma ~\mathbb{E}_{2}\left[\widetilde{Q}_{k-1}(\phi_{-k+1}(\widetilde{X}_{-k}, \widetilde{Z}_{-k}, \nu_{k}), \widetilde{Z}_{-k+1})   | \widetilde{X}_{-k}=s, \widetilde{Z}_{-k}=a \right],
\end{align*}
where $\widetilde{Z}_{-k+1} = \arg \min \widetilde{Q}_{k-1}(\phi_{-k+1}(\widetilde{X}_{-k}, \widetilde{Z}_{-k}, \nu_{k}), \ \cdot \ )$.

Then,
\begin{align*}
\widetilde{Q}_{k}(s, a)  &=  c(s,a) +  \gamma \sum_{s^{\prime}} \widehat{p}_{k-1}(s^{\prime}|s,a) \min_{b} \widetilde{Q}_{k-1}(s^{\prime},b) \\
&=  c(s,a) +  \gamma \sum_{s^{\prime}} \widehat{p}_{k-1}(s^{\prime}|s,a) \min_{b} \widehat{Q}_{k-1}(s^{\prime},b) \\
&= \widehat{Q}_{k}(s, a).
\end{align*}
\end{proof}

Now we show the following results.

\begin{prop}
\label{prop-copling-x1}
For $\omega \in \Omega_{1}, \nu \in \Omega_{2}$, we trace out two MDPs with state-action sequences
\begin{displaymath}
(\widetilde{X}_{m}(\omega, \nu), \widetilde{Z}_{m}(\omega, \nu))^{0}_{m=-k}, \ (\widetilde{X}'_{m}(\omega, \nu), \widetilde{Z}'_{m}(\omega, \nu))^{0}_{m=-k},
 \end{displaymath}
with initial conditions
\begin{displaymath}
(\widetilde{X}_{-k}(\omega, \nu), \widetilde{Z}_{-k}(\omega, \nu))=(s, a), \ (\widetilde{X}'_{-k}(\omega, \nu), \widetilde{Z}'_{-k}(\omega, \nu))=(s', a').
 \end{displaymath}
 These chains couple with probability $1$ as $k \rightarrow \infty$. 
\end{prop}

\begin{proof}
Note that by construction, two Markov chain paths initiated at time $-k$ traced from the above backward simulation in forward time beginning at $-k$ will merge once they hit a common state, i.e., get `coupled' (cf.\ \cite{propp1996exact}). Let $\widetilde{\tau}^{k}_{\omega, \nu}$ be the time after which these chains couple, i.e., $\widetilde{X}_{-k+\widetilde{\tau}^{k}_{\omega, \nu}} = \widetilde{X}'_{-k+\widetilde{\tau}^{k}_{\omega, \nu}}$ and $\widetilde{X}_{-k+l} \neq \widetilde{X}'_{-k+l}$ for all $0 \leq l < \widetilde{\tau}^{k}_{\omega, \nu}$. Since these chains are of finite length (from $-k$ to $0$), we may need to define the value of $\widetilde{\tau}^{k}_{\omega, \nu}$  arbitrarily if they  don't couple during this time. 

To overcome this, we let these chains  run to infinity. This can be done without loss of generality as follows.
For $-k \leq m < 0$, simulate the chains according to the backward simulation method specified by \eqref{eq:phi-sim-1}-\eqref{eq:phi-sim-2}. Suppose the i.i.d.\ random vectors $\nu_m$ are generated for all $-\infty < m < \infty$. For $m \geq 0$, continue the simulation to generate chains  $(\widetilde{X}_{m}(\omega, \mu), \widetilde{Z}_{m}(\omega, \mu))^{\infty}_{m=1}$, $(\widetilde{X}'_{m}(\omega, \mu), \widetilde{Z}'_{m}(\omega, \mu))^{\infty}_{m=1}$ as,
\begin{align}
\label{eq:phi-sim-3}
\widetilde{X}_{m} &= {\phi}^{1}_{m+k}(\widetilde{X}_{m-1}, \widetilde{Z}_{m-1}, \nu_{-(m-1)}) ,\\
\label{eq:phi-sim-4}
\widetilde{Z}_{m} & = {\phi}^{2}_{m+k}(\widetilde{X}_{m}, \tilde{\nu}_{-m}).
\end{align}

 It is easy to see that the $\widetilde{\tau}^{k}_{\omega, \nu}$ has the same statistical properties as the coupling time defined in equation \eqref{eq:defn-couplingtime}. So, by  Proposition \ref{thm:finite-coupling-sigma}, $\mathbb{E}[\widetilde{\tau}^{k}_{\omega, \nu}] < \infty$.  Now,
\begin{align*}
\sum_{n \geq 1} \mathbb{P}\left(2 \widetilde{\tau}^{k}_{\omega, \nu} \geq n \right) = \mathbb{E}[2 \widetilde{\tau}^{k}_{\omega, \nu} ] < \infty.
\end{align*}
Also, it is easy to see that $\widetilde{\tau}^{k}_{\omega, \nu}$s are identically distributed $\forall k$. So,
\begin{align*}
\sum_{n \geq 1} \mathbb{P}\left(2 \widetilde{\tau}^{k}_{\omega, \nu} \geq n \right)  = \sum_{n \geq 1} \mathbb{P}\left(2 \widetilde{\tau}^{n}_{\omega, \nu} \geq n\right) < \infty
\end{align*}
which implies
\begin{align*}
\sum_{n \geq 1} \mathbb{P}\left(\widetilde{\tau}^{n}_{\omega, \nu} -n > -\frac{n}{2} \right) < \infty.
\end{align*}
Then, by Borel-Cantelli lemma,   $\widetilde{\tau}^{n}_{\omega, \nu} -n \rightarrow - \infty$,~~$(\omega, \nu)$-a.s. Thus, the chains will couple with probability $1$.
\end{proof}

We shall need the following lemma of Blackwell and Dubins \cite{BlackwellDubins}, \cite[Chapter 3, Theorem 3.3.8]{borkar1995probability}.
\begin{lem}
\label{thm:lemmaBorkar}\cite{BlackwellDubins}
Let $Y_{k}, k = 1, 2, \ldots, \infty$ be  real random variables on a probability space $(\Omega_{1}, \mathcal{F}_{1}, \mathbb{P}_{1})$ such that $Y_{k} \rightarrow Y_{\infty}$ a.s.  and ${\mathbb E}[\sup_{k} |Y_{k}|] < \infty$. Let $\{\mathcal{F}_{k}\}$ be a family of sub-$\sigma$-fields of $\mathcal{F}$ which is either increasing or decreasing, with $\mathcal{F}_{\infty} = \vee_{k} \mathcal{F}_{k}$ or $\cap_{k} \mathcal{F}_{k}$ accordingly. Then, $\lim_{k, j \rightarrow \infty} {\mathbb E}[Y_{k}|\mathcal{F}_{j}] = {\mathbb E}[Y_{\infty}|\mathcal{F}_{\infty}]$ a.s. and in $L_{1}$.
\end{lem}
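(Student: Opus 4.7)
The plan is to split
\begin{equation*}
\mathbb{E}[Y_k \mid \mathcal{F}_j] - \mathbb{E}[Y_\infty \mid \mathcal{F}_\infty] = \mathbb{E}[Y_k - Y_\infty \mid \mathcal{F}_j] + \bigl(\mathbb{E}[Y_\infty \mid \mathcal{F}_j] - \mathbb{E}[Y_\infty \mid \mathcal{F}_\infty]\bigr)
\end{equation*}
and attack each piece with a different tool. The bracketed piece involves only the fixed integrable random variable $Y_\infty$ (integrability follows from $|Y_\infty| \le \sup_k |Y_k|$ and Fatou), so L\'evy's upward martingale convergence theorem (in the increasing case $\mathcal{F}_\infty = \bigvee_k \mathcal{F}_k$) or the downward theorem (in the decreasing case $\mathcal{F}_\infty = \bigcap_k \mathcal{F}_k$) immediately gives $\mathbb{E}[Y_\infty \mid \mathcal{F}_j] \to \mathbb{E}[Y_\infty \mid \mathcal{F}_\infty]$ a.s.\ and in $L^1$ as $j \to \infty$.

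For the first piece I would introduce the tail supremum $Z_m := \sup_{k \ge m} |Y_k - Y_\infty|$. By hypothesis $Z_m \downarrow 0$ a.s.\ and $Z_m \le 2 \sup_k |Y_k| \in L^1$, and for any $k \ge m$ one has
\begin{equation*}
\bigl|\mathbb{E}[Y_k - Y_\infty \mid \mathcal{F}_j]\bigr| \le \mathbb{E}\bigl[|Y_k - Y_\infty| \,\big|\, \mathcal{F}_j\bigr] \le \mathbb{E}[Z_m \mid \mathcal{F}_j].
\end{equation*}
Applying L\'evy's theorem once more to the integrable $Z_m$ gives $\mathbb{E}[Z_m \mid \mathcal{F}_j] \to \mathbb{E}[Z_m \mid \mathcal{F}_\infty]$ a.s.\ as $j \to \infty$, so
\begin{equation*}
\limsup_{j,k \to \infty,\, k \ge m}\bigl|\mathbb{E}[Y_k - Y_\infty \mid \mathcal{F}_j]\bigr| \le \mathbb{E}[Z_m \mid \mathcal{F}_\infty] \quad \text{a.s.}
\end{equation*}
Letting $m \to \infty$ along a countable cofinal sequence and invoking conditional dominated convergence (legitimate because $Z_m$ is dominated by $2 \sup_k |Y_k| \in L^1$) drives the right-hand side to $0$ a.s., so the joint limit of the first piece is $0$ a.s. The $L^1$ claim for this piece is easier and separate: since conditional expectation is an $L^1$-contraction, $\|\mathbb{E}[Y_k - Y_\infty \mid \mathcal{F}_j]\|_1 \le \|Y_k - Y_\infty\|_1 \to 0$ by the ordinary dominated convergence theorem.

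The main obstacle will be the bookkeeping needed to make the joint limit in $(k,j)$ rigorous without an illegitimate exchange of limits. The trick is to decouple the two indices through $Z_m$: freeze $m$ first and send $j \to \infty$ via L\'evy, and only afterwards send $m \to \infty$ via the conditional DCT, taking $m$ in a countable cofinal family so the null sets that appear at each $m$ can be unioned. Everything else is a routine consequence of the $L^1$-contraction property of conditional expectation and of the two standard martingale convergence theorems.
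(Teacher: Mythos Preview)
Your argument is correct. The paper does not supply its own proof of this lemma: it is quoted verbatim as a result of Blackwell and Dubins \cite{BlackwellDubins} (with a textbook reference to \cite{borkar1995probability}) and is used as a black box in the proofs of Proposition~\ref{thm:qhat-convergence} and Theorem~\ref{thm:maintheorem}. So there is nothing in the paper to compare your proposal against beyond the statement itself.

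For what it is worth, your route---splitting off $\mathbb{E}[Y_\infty\mid\mathcal{F}_j]-\mathbb{E}[Y_\infty\mid\mathcal{F}_\infty]$ and handling it by L\'evy's upward/downward theorem, then controlling $\mathbb{E}[Y_k-Y_\infty\mid\mathcal{F}_j]$ via the tail envelope $Z_m=\sup_{k\ge m}|Y_k-Y_\infty|$ and a second appeal to L\'evy followed by conditional dominated convergence---is exactly the standard proof of the Blackwell--Dubins lemma. The decoupling of the two indices through the auxiliary index $m$ is the right device for making the joint limit rigorous, and your remark about unioning countably many null sets over $m$ handles the measurability bookkeeping. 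The $L^1$ claim via the contraction property of conditional expectation is also correct and, as you note, independent of the almost-sure argument.
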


We now show that $\widetilde{Q}_{k}(\omega)$ converges to a random variable $Q^{*}(\omega)$ almost surely. By the above proposition, this will imply the almost sure convergence of $\widehat{Q}_{k}$ to $Q^{*}(\omega)$. 

%

We now give the proof of Theorem \ref{thm:qhat-convergence}.

\begin{proof}

Consider the backward simulation described above. For $\omega \in \Omega_{1}, \nu \in \Omega_{2}$, we trace out two MDPs with state-action sequences
\begin{displaymath}
(\widetilde{X}_{m}(\omega, \nu), \widetilde{Z}_{m}(\omega, \nu))^{0}_{m=-k}, \ (\widetilde{X}'_{m}(\omega, \nu), \widetilde{Z}'_{m}(\omega, \nu))^{0}_{m=-k},
 \end{displaymath}
with initial conditions
\begin{displaymath}
(\widetilde{X}_{-k}(\omega, \nu), \widetilde{Z}_{-k}(\omega, \nu))=(s, a), \ (\widetilde{X}'_{-k}(\omega, \nu), \widetilde{Z}'_{-k}(\omega, \nu))=(s', a').
   \end{displaymath}
 Note that by construction, two Markov chain paths initiated at time $-k$ traced from the above backward simulation but in forward time beginning at $-k$, will  merge once they hit a common state, i.e., get `coupled' (cf.\ \cite{propp1996exact}). Decrease $-k$  until all paths initiated at $-k$ couple. Once they couple, they follow the same sample path.

Now, by construction, 
\begin{align*}
\widetilde{Q}_{k} (s, a) - \widetilde{Q}_{k}(s', a') &= \mathbb{E}_{2}\bigg[ \sum^{(-k+\widetilde{\tau}^{k}_{\omega, \nu}-1) \wedge (-1)}_{l=-k} \gamma^{l+k} \left(c(\widetilde{X}_{l}, \widetilde{Z}_{l}) - c(\widetilde{X}'_{l}, \widetilde{Z}'_{l})  \right)   \\
&\hspace{1cm} + \gamma^{k \wedge \widetilde{\tau}^{k}_{\omega, \nu}} \left(\widetilde{Q}_{0}(\widetilde{X}_{0}, \widetilde{Z}_{ 0}) - \widetilde{Q}_{0}(\widetilde{X}'_{0}, \widetilde{Z}'_{0})\right) \\
&\hspace{3cm}  \bigg | (\widetilde{X}_{-k}, \widetilde{Z}_{-k})=(s, a), (\widetilde{X}'_{-k}, \widetilde{Z}'_{-k})=(s', a')   \bigg]
\end{align*}
Since the  chains will couple with probability $1$ (according to Proposition \ref{prop-copling-x1}), the RHS of the above equation will converge to a random variable $R(\omega)(s,a,s',a')$, $\omega$-a.s.  as $k \rightarrow \infty$, i.e,
\begin{equation}
R_{k}(\omega)(s, a, s', a') := \widetilde{Q}_{k} (s, a) - \widetilde{Q}_{k}(s', a') \rightarrow R(\omega)(s, a, s', a'),~ \omega-\text{a.s.}
\end{equation}
We revert to the `forward time'  picture henceforth. Now,
\begin{align*}
\widehat{Q}_{k+1} (s, a) &= c(s,a) + \gamma \sum_{s^{\prime}} \widehat{p}_{k}(s^{\prime}|s,a) \min_{b} \widehat{Q}_{k}(s^{\prime},b) \\
&= c(s,a) + \gamma~ \sum_{s^{\prime}} \widehat{p}_{k}(s^{\prime}|s,a) \min_{b} \left(\widehat{Q}_{k}(s^{\prime},b) - \widehat{Q}_{k} (s, a)\right) + \gamma~ \widehat{Q}_{k} (s, a) \\
&= c(s,a) + \gamma~ \sum_{s^{\prime}} \widehat{p}_{k}(s^{\prime}|s,a) \min_{b} R_{k}(\omega)(s', b, s, a) + \gamma~ \widehat{Q}_{k} (s, a) \\
\end{align*}
Since $\widehat{p}_{k}$ depends only on $\omega$, we can define another random variable  $R'_{k}(\omega)(s, a)$ such that
\begin{align*}
R'_{k}(\omega)(s, a) &:= \sum_{s^{\prime}} \widehat{p}_{k}(s^{\prime}|s,a) \min_{b} R_{k}(\omega)(s', b, s, a), \\
&= E\left[\min_{b} R_{k}(\omega)(s', b, s, a) | \tilde{\mathcal{F}}_{k-1}\right],
\end{align*}
where $\tilde{\mathcal{F}}_{k-1} := \sigma(\xi^{k'}_i(s,a), s \in \mathbb{S}, a \in \mathbb{A}, 1 \leq i \leq n, k' < k)$. Since $R_{k}(\omega) \rightarrow R(\omega),~ \omega-\text{a.s.}$, it follows from the preceding lemma that there exists another random variable $R^{*}(\omega)$ such that
\[R'_{k}(\omega) \rightarrow  R^{*}(\omega),~ \omega-\text{a.s.}\]
Then,
\begin{align*}
\widehat{Q}_{k+1} (s, a) &= c(s,a) + \gamma~ R'_{k}(\omega)(s, a)  + \gamma~ \widehat{Q}_{k} (s, a) \\
&= c(s,a) + \gamma~ R'_{k}(\omega)(s, a)  + \gamma~ c(s,a) + \gamma^{2}~ R'_{k-1}(\omega)(s, a) +\gamma^{2}~ \widehat{Q}_{k-1} (s, a) \\
&~~\vdots \hspace{2cm} \vdots \\
&= c(s, a) \sum^{k}_{l=0} \gamma^{l} + \gamma~\sum^{k}_{l=0} \gamma^{l} R'_{k-l}(\omega)(s, a) + \gamma^{k+1} \widehat{Q}_{0} (s, a).
\end{align*}
Clearly,
\[\widehat{Q}_{k} (s, a) \rightarrow Q^{*}(\omega) := \frac{c(s, a)}{(1-\gamma)} + \frac{\gamma R^{*}(\omega)(s, a)}{(1-\gamma)},~\omega-\text{a.s.}  \]
\end{proof}

Next we provide a proof of Corollary \ref{thm:maintheorem}.
\begin{proof}
Let $(\Omega_{1},\mathcal{F}_{1},\mathbb{P}_{1})$ be the probability space as defined before. By ${\mathcal{F}}_{k}$ denote $\sigma(\widehat{Q}_{m}, m \leq k)$.
From Proposition \ref{thm:qhat-convergence}, $\widehat{Q}_{k}(\omega)  \rightarrow \widehat{Q}^{*}(\omega), ~\omega-\text{a.s.}$, and hence, $\widehat{Q}_{k}(\omega) - \widehat{Q}_{k-1}(\omega) \rightarrow 0$. Taking conditional expectation and using Lemma \ref{thm:lemmaBorkar} we get, $$\mathbb{E}[\widehat{Q}_{k}(\omega)|{\mathcal{F}}_{k-1}] - \widehat{Q}_{k-1}(\omega)  \rightarrow 0.$$ Since $\widehat{Q}_{k}(\omega) = \widehat{G}(\theta_{k-1}\omega, \widehat{Q}_{k-1}(\omega))$, from equation \eqref{eq:expectation-T-hat}, $$\mathbb{E}[\widehat{Q}_{k}(\omega)|{\mathcal{F}}_{k-1}] = G(\widehat{Q}_{k-1}(\omega))$$ where $G$ is the $Q$-value operator defined in equation \eqref{eq:qvalueoperator1}. This gives $G(\widehat{Q}_{k-1}(\omega)) - \widehat{Q}_{k-1}(\omega) \rightarrow 0$. Then, by the continuity of $G$, $G(\widehat{Q}^{*}(\omega))=\widehat{Q}^{*}(\omega)$ which implies that $\widehat{Q}^{*}(\omega)$ is indeed equal to the optimal $Q$-function ${Q}^{*}$, by the uniqueness of the fixed point of $G$.
\hfill  
\end{proof}

\section{Rate of Convergence and Asynchronous EQVI}\label{sec:rate}

In this section, we now provide a rate of convergence, or a non-asymptotic sample complexity bound. This follows from methods that had been developed in \cite{haskell2013empirical} for empirical value and policy iteration, which however only provide a convergence in probability guarantee.  In the second subsection, we provide an argument of why asynchronous EQVI will also work. This also uses  methods developed earlier in \cite{haskell2013empirical}.

\subsection{Rate of Convergence}

One notable observation about  Theorem \ref{thm:maintheorem} is that the almost sure convergence of the EQVI iterate holds for any $n$.   However,  the rate of convergence will and does depend on $n$ and this is confirmed by the simulation results (c.f. Section \ref{sec:simulations}). While almost sure convergence guarantee, i.e., $\widehat{Q}_{k} \rightarrow Q^{*}$ almost surely as $k \rightarrow \infty$ is a strong result, rate of convergence is an important consideration in practical applications. Unfortunately, the coupling argument used in the proof of  Theorem \ref{thm:maintheorem} does not  yield a rate of convergence. However, we note that exact $Q$-value operator $G(\cdot)$ is a contraction, and its empirical variant $\widehat{G}(\cdot)$ is a `random contraction operator'. 

In \cite{haskell2013empirical}, a  technique for analyzing the rate of convergence of a random sequence resulting from  iteration of a random contraction operator was developed. This was used to show the probabilistic convergence of empirical value iteration and explicit bounds were given on the number of simulations samples $n$ and the number of iterations $k$ that are needed to get an $\epsilon$-optimal value function with a probability greater than $(1-\delta)$. We now argue that the exact $Q$-value operator $G(\cdot)$ is a contraction, and its empirical variants $\widehat{G}(\cdot)$ satisfy Assumptions (4.1)-(4.4) in \cite{haskell2013empirical}, and thus a very similar methodology can be used in establishing convergence in probability of the iterates of EQVI (weaker than Theorem \ref{thm:maintheorem} in this paper). But more importantly, it yields a rate of convergence and a non-asymptotic sample complexity result, i.e., for any given $\epsilon >0, \delta > 0$,  we give an explicit bound on the number of simulation samples $n$ and the number of iterations $k$ that are needed to get an $\epsilon$-optimal $Q$-value with probability  greater that $(1 - \delta)$. 

\noindent \textbf{Assumptions.} The classical operator $G$ and a sequence of random operators $\{\widehat{G}_n\}$ satisfy the following:
\begin{enumerate}
\item[4.1] $\mathcal{P}\left(\lim_{n\rightarrow\infty}\|\widehat{G}_{n}q-G\, q\|\geq\epsilon\right)=0$
 $\forall \epsilon>0$ and $\forall q\in\mathbb{R}^{\| \mathbb{S} \|}$. Also $G$ has a (possibly non-unique) fixed point $q^{*}$ such that $G q^{*} = q^{*}$.
\item[4.2] There exists a $\kappa^{*} < \infty$ such that $\|\hat{q}_{n}^{k}\| \leq  \kappa^{*}$ almost surely  for all $k \geq 0$, $n \geq 1$. Also, $\|q^{*}\| \leq \kappa^{*}$. 
\item[4.3] $\|G\, q-q^{*}\|\leq\gamma\,\|q-q^{*}\|$
for all $q \in\mathbb{R}^{|\mathbb{S}|}$.
\item[4.4] There is a sequence $\{p_{n}\}_{n \geq 1}$ such that 
\[P\left(\|G q - \widehat{G}_{n}q \| < \epsilon \right) > p_{n}(\epsilon)  \]
and $p_{n}(\epsilon) \uparrow 1$ as $n \rightarrow \infty$ for all $v\in \overline{B_{\kappa^{*}}(0)}$, $\forall \epsilon > 0$ . 
\end{enumerate}


It can be shown that the exact $Q$-value operator $G$ and its empirical variants $\widehat{G}_n$ (where the index $n$ is  for number of samples) satisfies the above assumptions. It can be argued easily by using strong law of large numbers that Assumption 4.1 is satisfied. Assumption 4.2 is satisfied easily when rewards are bounded. Assumption 4.3 is satisfied since $G$ is a contraction operator. It can easily be checked that Assumption 4.4 is satisfied with 
\begin{equation}
\label{eq:p-n}
p_{n} = 1 - 2 |\mathbb{S}| |\mathbb{A}| e^{-2 (\epsilon/\gamma)^{2}n/((\kappa^{*})^{2})}.
\end{equation}
This implies convergence in probability of the $Q$-value iterates (weaker than in the previous section) to the optimal $Q$-value. Now, following arguments and construction similar to Section 5.1 in \cite{haskell2013empirical}, we can derive a non-asymptotic sample complexity bound given in Theorem \ref{thm:roc}. 

%

Since the details of the proofs are the same as in \cite{haskell2013empirical}, we only give a short outline here. Readers are referred  to \cite{haskell2013empirical} for details. The proof is based on the idea of constructing a sequence of Markov chains that stochastically dominate a discrete error process. More precisely, we are interested in the rate of convergence of the  sequence $\{\|\widehat{Q}_{k} - Q^{*}\|, k \geq 0\}$  to $0$. But since the error process $\{\|\widehat{Q}_{k} - Q^{*}\|, k \geq 0\}$ is continuous-valued,we first discretize it and get a discrete error process now defined on non-negative integers. Unfortunately, this process is not Markovian. Hence, we construct a Markov chain $\{Y_k^n, k \geq 0 \}$ that has the following structure: 
\begin{equation}
\label{eq:Ynt-construction }
Y_{k}^n=\begin{cases}
\max\left\{ Y_{k-1}^n,\,\eta^{*}\right\} , & \mbox{with probability }p_{n},\\
N^{*}, & \mbox{with probability}1-p_{n}.
\end{cases}
\end{equation}
Note that $p_{n}$ is close to $1$ for sufficiently large $n$. The Markov chain $\{Y_{k}^n, k \geq 0\}$ will either move one unit closer to zero until it reaches $\eta^{*}$, or it will move (as far away from zero as possible) to $N^{*}$ (and hence bounds are very conservative). We can show that this Markov chain stochastically dominates the discrete error process. Further, as $n$ goes to infinity, the invariant distribution of the Markov chain will concentrate at zero, which establishes convergence of the error process $\{\|\widehat{Q}_{k} - Q^{*}\|, k \geq 0\}$ to zero in probability. Now the mixing rate of the Markov chain gives the rate of convergence and the sample complexity bound for EQVI in the above theorem.

\subsection{Asynchronous EQVI}\label{sec:asynch} 

We now show that just as for EVI, the asynchronous version of EQVI works as well. That is, the Q-value function estimates converge in probability even when the updates are asynchronous, including in the ``online'' case when updates are done for one state at a time. We consider each state to be visited at least once to complete a full cycle, and the time for a full cycle could be random. 

Let $\left(\s_{k},\a_k\right)_{k\geq0}$ be any infinite sequence of states and actions. This sequence $\left(\s_{k},\a_k\right)_{k\geq0}$
may be deterministic or stochastic, and it may even depend online on the $Q$-value function updates. For shorthand, denote $z=(\s,\a)$. For any $z\in\mathbb{S}\times\mathbb{A}$, we define the asynchronous Q-value operator $G_{z}$ as
\[
\left[G_{z}Q\right]\left(s,a\right)=\begin{cases}
c\left(\s,\a\right)+ \gamma \mathbb{E}\left[\min_{b\in \mathbb{A} } Q\left(\psi\left(\s,\a,\xi\right),b\right)\right] , & (s,a)=z\\
Q\left(s,a\right), & \text{otherwise}.
\end{cases}
\]
Also define its empirical variant with $n$ samples as: 
\[
\left[\widehat{G}_{z,n}\left(\omega\right)\widehat{Q}\right]\left(s,a\right)=\begin{cases}
c\left(\s,\a\right)+\frac{\gamma}{n}\sum_{i=1}^{n}\min_{b\in \mathbb{A}}\widehat{Q}\left(\psi\left(\s,\a,\xi_{i}\right),b\right), & (s,a)=z,\\
\widehat{Q}\left(s,a\right), & \text{otherwise}.
\end{cases}
\]
The operators $G_{z}$ and $\widehat{G}_{z,n}$ only update the Q-value function for state $s$  and action $a$, and leaves the other estimates unchanged. This will then produce a sequence of updates $\{Q_k\}$ and $\{\widehat{Q}^n_{k}\}$ respectively starting from some intial seed $Q_0$.

Suppose that in some finite number of steps $K_1$, each state-action pair is visited at least once. Define,
\[
\widetilde{G}:=G_{z_{K_1}}\cdots G_{z_{1}}G_{z_{0}},
\]
which is a contraction with constant $\gamma$. It is well-known \cite{Borkar08book} that if each state-action pair is visited infinitely often, the sequence produced by asynchrononus $Q$-value iteration, $\{Q_k\}$ will converge to $Q^*$, the optimal $Q$-value.  

Now define the time of $(m+1)th$ full update
\[
K_{m+1}:=\inf\left\{ k\mbox{ : }k\geq K_{m},\,\left(z_{i}\right)_{i=K_{m}+1}^{k}\mbox{ includes every state-action pair in }\mathbb{S}\times\mathbb{A} \right\} ,
\]
with $K_0=0$. We can now give a slightly modified stochastic dominance argument to show that asynchronous EVI will converge in a probabilistic sense by checking the progress of the algorithm at these hitting times, i.e., we look at the sequence $\{\widehat{Q}^n_{K_m}\}_{m \geq 0}$. In the simplest update scheme, each state-action pair is updated in turn and the length of a full update cycle is $|\mathbb{S}||\mathbb{A}|$. 

Now, analogous to $\widetilde{G}$, we can define an operator $\widehat{G}_{n}$,
\[
\widehat{G}_n:=\widehat{G}_{z_{K_1},n}\cdots \widehat{G}_{z_{1},n}\widehat{G}_{z_{0},n}.
\]
Note that each random operator in this iteration introduces an error $\epsilon/|\mathbb{S}||\mathbb{A}|$ as compared to the corresponding non-random operator.  This can be ensured by picking $n$ large enough such that\\
\[P\left\{ \|\widehat{G}_{z,n}Q-G_{z}Q|| \geq\epsilon/|\mathbb{S}||\mathbb{A}|\right\} \leq   2\, e^{-2\,\left(\epsilon/(\gamma |\mathbb{S}| |\mathbb{A}|)\right)^{2}n/\left(2\,\kappa^{*}\right)^{2}} \]
where $\kappa^*$ is a constant that can be computed.
This can be used now to guarantee that $P\left\{ \|\widehat{G}_{n}Q-\widetilde{G}_{z}Q|| \geq\epsilon \right\}$ 
is upper bounded by 
\[
p_{n}=2\,|\mathbb{S}|\,|\mathbb{A}|\, e^{-2\,\left(\epsilon/(\gamma |\mathbb{S}| |\mathbb{A}|)\right)^{2}n/\left(2\,\kappa^{*}\right)^{2}}.
\]
Now, the stochastic dominance argument developed in \cite{haskell2013empirical} can be applied to obtain the following result. 

\begin{thm}
If each state-action pair is visited in turn infinitely often, the iterates of asynchronous EQVI,
\[\widehat{Q}^n_k \to Q^* ~~~\text{in probability} \]
as $n,k \to \infty$.
\end{thm}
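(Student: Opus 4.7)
The plan is to track the iterates only at the full-cycle completion times $\{K_m\}_{m \geq 0}$ and reduce the problem to the synchronous framework already analyzed in Theorem \ref{thm:roc}. At these times, the exact asynchronous update over one cycle is given by a composition $\widetilde{G}$ of the form $G_{z_{K_{m+1}}} \cdots G_{z_{K_m+1}}$, which is a $\gamma$-contraction with unique fixed point $Q^*$, while its empirical counterpart is the composition $\widehat{G}_n$ of the per-coordinate random operators $\widehat{G}_{z,n}$. Thus the sequence $\{\widehat{Q}^n_{K_m}\}_{m\geq 0}$ evolves as iterates of a random contraction approximating the deterministic contraction $\widetilde{G}$, which is precisely the regime to which the stochastic dominance machinery applies.

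The first substantive step is to verify that the pair $(\widetilde{G}, \widehat{G}_n)$ satisfies Assumptions 4.1--4.4. Assumption 4.3 is immediate from the contraction property of $\widetilde{G}$; Assumption 4.2 follows from boundedness of costs together with the fact that each component operator preserves the a priori bound $\kappa^* = \max_{s,a} c(s,a)/(1-\gamma)$; Assumption 4.1 follows from the strong law of large numbers applied coordinate by coordinate. The crux is Assumption 4.4, and this is exactly what the display preceding the theorem delivers: Hoeffding's inequality gives a per-coordinate bound $P(\|\widehat{G}_{z,n}Q - G_z Q\| \geq \epsilon/(|\mathbb{S}||\mathbb{A}|)) \leq 2 e^{-c n \epsilon^2}$, and the triangle inequality across the $|\mathbb{S}||\mathbb{A}|$ compositions within a cycle (combined with the contractive growth of errors) yields the desired $p_n \to 1$ uniformly in the current iterate.

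With the assumptions established, one replays the construction from \cite{haskell2013empirical}: discretize $\|\widehat{Q}^n_{K_m} - Q^*\|$ into $N^* + 1$ levels of width $\epsilon_g = \epsilon/\eta^*$, and define the dominating Markov chain $Y_m^n$ that either moves one unit toward $\eta^*$ (with probability $p_n$, thanks to $\widetilde{G}$'s contraction) or resets to $N^*$ (with probability $1-p_n$, a conservative bookkeeping of the random error). Stochastic dominance of the discretized error by $Y_m^n$ is proved exactly as in Theorem \ref{thm:roc}. Since the stationary distribution of $Y_m^n$ concentrates below $\eta^*$ as $n \to \infty$, we obtain $\widehat{Q}^n_{K_m} \to Q^*$ in probability, and this extends to all $k$ because any $k$ lies between two consecutive $K_m$'s and iterates are bounded.

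The only delicate point, and the main potential obstacle, is that the cycle length $K_{m+1} - K_m$ and even the schedule $\{z_k\}$ may depend on the history of the iterates, so one must ensure that the per-cycle Hoeffding bound is valid conditionally on $\mathcal{F}_{K_m}$. This is harmless because within each cycle the samples $\{\xi_i^k(s,a)\}$ driving $\widehat{G}_{z,n}$ are fresh and independent of everything observed up to $K_m$; hence the bound holds conditionally with the same constants. Beyond this bookkeeping, the proof is essentially a direct transcription of the synchronous argument with $\widetilde{G}$ in place of $G$.
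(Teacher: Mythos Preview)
Your proposal is correct and follows essentially the same route as the paper: the paper also subsamples at the cycle times $K_m$, forms the composed operators $\widetilde{G}$ and $\widehat{G}_n$, obtains the per-coordinate Hoeffding bound and the resulting cycle-level $p_n$, and then invokes the stochastic dominance construction of \cite{haskell2013empirical}. If anything, you have been more explicit than the paper about verifying Assumptions~4.1--4.4 and about the conditional validity of the Hoeffding bound when the schedule is history-dependent; the paper treats these as understood and simply points to \cite{haskell2013empirical}.
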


\begin{rem}
We note that the ``online'' version of asynchronous EQVI is like the popular Q-learning algorithm used for reinforcement learning. As we see in the numerical results in the next section, online EQVI has a much faster convergence than Q-learning, though the theoretical guarantees are weaker, i.e., convergence in probability for EQVI and almost sure for Q-learning.
\end{rem}

\section{Numerical Results}\label{sec:simulations}

In this section, we show some numerical results comparing the classical Q-Learning (QL) algorithm (given in equation \eqref{eq:classicalQL}) with our \textit{empirical $Q$-value iteration} (EQVI). We generate a `random' MDP, with $|\mathbb{S}|=500$ and $|\mathbb{A}|=10$,  where the transition matrix $P$ and the cost $c(s, a)$ are generated randomly. We plot relative error $e_{k} := ||\widehat{Q}_{k} - Q^{*}||/||Q^{*}$ vs the number of iterations. Note that the synchronous verion of QL was used in which we used more than one simulation samples and updated all state-action pairs at the same time.

We can represent the update equations of both QL and EQVI using the operator $\widehat{G}$ (c.f. \eqref{eq:empirical-q-operator1}, \eqref{eq:classicalQL})
\begin{align}
\text{QL}:~ Q_{k+1} &= \left(1 - \alpha_{k}\right) Q_{k} + \alpha_{k} \left(\widehat{G}(\theta_{k}\omega, Q_{k}) \right) \\
\text{EQVI}:~ \widehat{Q}_{k+1} &= \widehat{G}(\theta_{k}\omega, \widehat{Q}_{k})
\end{align}
So, both EQVI and QL can be run using the same matlab code. For EQVI, set $\alpha_{k}=1, \forall k$. Note that this does not make EQVI a stochastic approximation scheme since it does not satisfy the step-size requirement.

\begin{figure}[h!]
\centering
\includegraphics[width=5in]{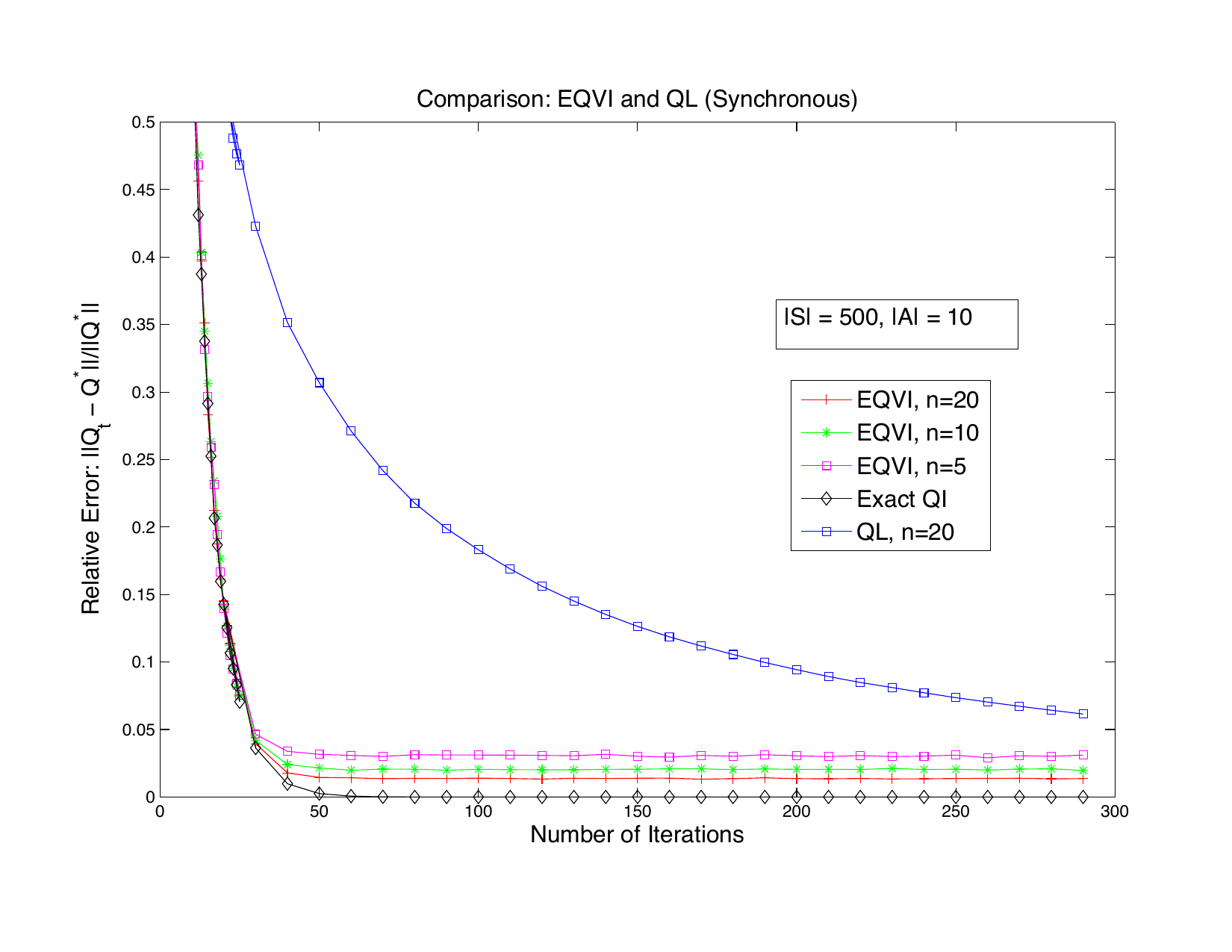} 
\caption[Comparison of EQVI and QL]{Comparison of Synchronous exact QVI, EQVI and QL for a 500 state and 10 action random MDP. For QL, the step size $\alpha_{k} =  1/k^{\theta}$, $\theta=0.6$. Averaging over 50 runs.}
\label{fig:plot1}
\end{figure}

As  you can see from Figure \ref{fig:plot1}, the rate of decay of relative error is way faster in EQVI (and pretty close to exact QVI) as compared to Synchronous QL. In fact, to reach 5\% relative error, QVI takes about 30 iterations, EQVI takes just a bit more (about 35), while Synchronous QL takes more than 300. Thus, EQVI promises at least a 10x speedup over Synchronous QL. In fact, in about 35 iterations, Synchronous QL has a 50\% relative error. The relative error has been estimated from 50 simulation runs and the confidence intervals are very tight. Note that as we take per samples per iteration, we start to approach performance of exact QVI.




\begin{figure}[h!]
\centering
\includegraphics[width=5in]{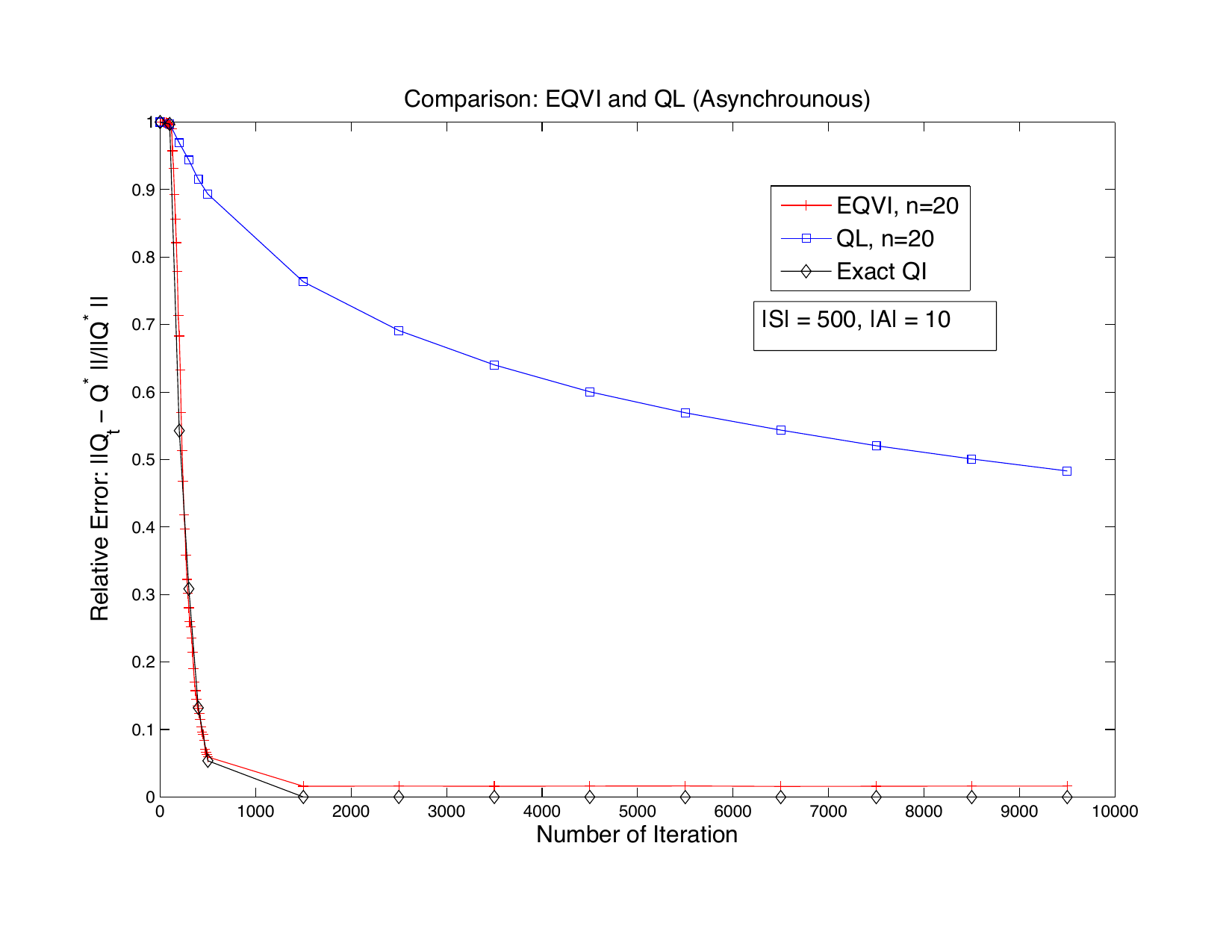} 
\caption[Comparison of EQVI and QL]{Comparison of Asynchronous exact QVI, EQVI and QL for a 500 state and 10 action random MDP with multiple samples in each step. For QL, the step size $\alpha_{k} =  1/k^{\theta}$, $\theta=0.6$. Averaging over 50 runs.}
\label{fig:plot2}
\end{figure}

Figure \ref{fig:plot2} shows asynchronous EQVI and QL for a random MDP with 500 states and 10 actions wherein state-action pairs were chosen randomly in each iteration. As can be seen, exact QVI and EQVI get to within 5\% relative error in about 500 iterations (quite remarkable since there are 5000 state-action pairs), while QL in 500 iterations has a  90\% relative error. In fact, (asynchronous) QL is so slow that even after 10,000 iterations, the relative error is still about 50\%. As before, the relative error has been estimated from 50 simulation runs and the confidence intervals are very tight.

From these simulations. it is clear that EQVI promises significantly faster performance than Q-learning, in both synchronous, as well as asynchronous settings.

\begin{rem}
\label{remark-qleqvi-sameform}
As mentioned above, we get a very fast convergence with EQVI to a ball park estimate but then an extremely slow (in fact, imperceptible in the given time frame) movement to the exact value as guaranteed by theory. To get some intution about why so, consider the uncontrolled case. Then, the iterations are of the form
\begin{displaymath}
Q_{k+1} = \breve{G}Q_k,
\end{displaymath}
where $\breve{G}$ is a random \textit{affine} contraction. This may further be written as
\begin{displaymath}
Q_{k+1} = \check{G}Q_k + M_k = AQ_k + b + M_k,
\end{displaymath}
where $\check{G}(x) = Ax + b$ for suitably defined $A, b$ is a deterministic affine contraction and $\{M_k\}, M_k := \breve{G}Q_k - \check{G}Q_k,$ a martingale difference sequence. Note that $A$ in our case is $\gamma$ times a stochastic matrix, hence a stable matrix. Then
\begin{displaymath}
Q_k = A^kQ_0 + \sum_{m = 0}^{k - 1}A^{k - m}b +  \sum_{j = 0}^{k - 1}A^{k - j}M_j.
\end{displaymath}
The first term on the right decays to zero, the second converges to the desired limit, and the third represents noise. If $\{M_k\}$ were i.i.d., this would converge to a stationary process, not to zero. In our case, it does converge to zero as implicit in the proof of Theorem \ref{thm:maintheorem}. In case of stochastic approximation, $M_k$ would be weighted by a square-summable step-size which accelerates this convergence to zero. But in our case, in the absence of  such additional damping, the fluctuations can be expected to diminish only very slowly. On the other hand, the decay of dependence on initial condition and convergence of the middle term to the desired limit are no longer incremental as in the stochastic approximation counterpart and therefore very rapid. This is in tune with the well known bias-variance trade-off and not surprising. This does, however, suggest that a hybrid scheme that runs empirical Q-Value Iteration initially and then switches to conventional Q-Learning will have the best of both the worlds if a faster almost sure convergence is needed. Note also that the performance of our scheme improves rapidly with increasing $n$. For practical problems, using EQVI until the relative error is below some threshold (e.g., 1-5 \%) may be enough.
\end{rem}

\section{Conclusions}\label{sec:conclusions}

We have presented a new (offline and online) $Q$-value iteration algorithm for discounted-cost  MDPs. We have rigourously established the convergence of this algorithm to the desired limit with probability one. Unlike the classical learning schemes for MDPs such as $Q$-learning and actor-critic algorithms, our algorithm or analysis does not use a stochastic approximation method and is a non-incremental scheme. Preliminary experimental results suggest a faster rate of convergence for our algorithm than currently popularly used algorithms.

A particularly interesting and useful aspect is whether distributed and asynchronous implementation of EQVI will work. We have been able to show that for the special case where each state-action pair is updated in turn. Moreover, the convergence guarantee is only probabilistic. It would be useful to show that even with randomly picked state-action pairs, as long as each one of them is picked infinitely often, we will get convergence and in the stronger almost sure sense (as for our main result for the synchronous case.) 

Another useful direction will be to show that this would work with infinite (even continuous) state and action spaces. This would then make such an algorithm useful even for partially observed MDP problems. This will require combining current methods with function approximation in an appropriate space (e.g., a Reproducing Kernel Hilbert Space (RKHS)). 


Another useful direction would be the average reward case. Average reward MDPs are typically hard to analyze because the dynamic programming operator for average reward MDP is  not a contraction mapping. There are, however, provably convergent $Q$-learning  and actor-critic algorithms for average reward MDPs due to the powerful ODE approach to stochastic approximation \cite{abounadi2001learning} \cite{konda1999actor}. It would be interesting to see if our algorithm works for learning in MDPs with average reward criterion.

These are directions for future research. 
%

\appendix

\section{}
\subsection*{Proof of Proposition \ref{thm:finite-coupling-sigma}}
We present this as a series of lemmas. 

Given an initial time $k_{0}$ and states  $s_{0}, s' \in \mathbb{S}$, we define the \textit{hitting time}  $\tau_{\omega, \nu}$ of the controlled Markov chain $\left(X_{k}(\omega, \nu)\right)_{k \geq k_{0}}$ as
\begin{equation}
\label{eq:hitting-time}
\tau_{\omega, \nu}(s_{0}, s') := \min\{m \geq 0 | X_{k_{0}+m}(\omega, \nu) = s', X_{k_{0}}(\omega, \nu) = s_{0}  \}.
\end{equation}

We first show that the expected value of the hitting time is finite when the chain is controlled by a stationary strategy, i.e., $\phi^{2}_{k} \approx \pi \in \Pi, \forall k$.
\begin{lem}
\label{thm:finite-tau-pi}
Let $(X_{k}(\omega, \nu), Z_k)_{k \geq k_{0}}$ be the sequence of state-action pairs for  the  MDP simulated according to \eqref{eq:simfunc-2-compos2} using  a stationary control strategy   $\phi^{2}_{k} \approx \pi \in \Pi, \forall k$. Let  $\tau_{\omega, \nu}$ be the \textit{hitting time} as defined in equation \eqref{eq:hitting-time}.  Then,
\[\mathbb{E}\left[\tau_{\omega, \nu}(s_{0}, s') \right]  < \infty,~ \forall s_{0}, s' \in \mathbb{S}.  \]
\end{lem}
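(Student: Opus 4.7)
My plan is to show that under the joint measure $\mathbb{P} = \mathbb{P}_1 \otimes \mathbb{P}_2$ the process $(X_k)_{k \geq k_0}$ is a time-homogeneous Markov chain on $\mathbb{S}$ with transition kernel \emph{exactly} $P^{\pi}$, even though, conditional on $\omega$, the step-$k$ transition is driven by the random empirical kernel $\widehat{p}_k$. Once this is established, Assumption \ref{assumption:1} together with Remark \ref{remark-1} implies that $P^{\pi}$ is irreducible and aperiodic on the finite state space $\mathbb{S}$, and hence positive recurrent, so $\mathbb{E}[\tau_{\omega,\nu}(s_0, s')] < \infty$ by standard Markov chain theory.

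The key computation is unbiasedness of the empirical kernel: by \eqref{eq:phat-defn} and \eqref{eq:simulationfunction1},
\begin{equation*}
\mathbb{E}_1\bigl[\widehat{p}_k(s' \mid s, a)\bigr] = \mathbb{E}_1\!\left[ \frac{1}{n} \sum_{i=1}^{n} I\{\psi(s, a, \xi^{k}_i(s, a)) = s'\} \right] = p(s' \mid s, a).
\end{equation*}
To upgrade this pointwise identity to a Markov property of the marginal process, I would observe that by construction the history $(X_{k_0}, Z_{k_0}, \ldots, X_k, Z_k)$ is a measurable function of $(\omega_j, \nu_j, \tilde{\nu}_j)_{j < k}$ only, hence independent of the step-$k$ noise triple $(\omega_k, \nu_k, \tilde{\nu}_k)$. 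Conditioning on $\{X_k = s, Z_k = a\}$ and integrating out $\omega_k$ and $\nu_k$ then gives
\begin{equation*}
\mathbb{P}\bigl(X_{k+1} = s' \,\big|\, X_{k_0}, Z_{k_0}, \ldots, X_k = s, Z_k = a\bigr) = \mathbb{E}_1\bigl[\widehat{p}_k(s' \mid s, a)\bigr] = p(s' \mid s, a),
\end{equation*}
while the independence of $\tilde{\nu}_k$ from the past, together with $\phi^2_k \approx \pi$, yields $\mathbb{P}(Z_{k+1} = a' \mid X_{k+1} = s', \text{past}) = \pi(s', a')$. Marginalising the action then produces
\begin{equation*}
\mathbb{P}\bigl(X_{k+1} = s' \,\big|\, X_{k_0}, \ldots, X_k = s\bigr) = \sum_{a \in \mathbb{A}} \pi(s, a)\, p(s' \mid s, a) = P^{\pi}(s, s'),
\end{equation*}
as required.

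The only real obstacle is conceptual: one has to untangle the two noise sources $\omega$ and $\nu$ and be comfortable with the claim that, although for each fixed $\omega$ the chain is \emph{not} a time-homogeneous Markov chain with kernel $P^{\pi}$, its joint law under $\mathbb{P}_1 \otimes \mathbb{P}_2$ \emph{is}. After this, nothing is delicate: on a finite state space an irreducible aperiodic chain has geometrically decaying hitting-time tails. Concretely, Remark \ref{remark-1} gives a positive integer $r_{\pi}$ and $\varepsilon := \min_{s, s'} (P^{\pi})^{r_{\pi}}(s, s') > 0$, so a standard block argument yields $\mathbb{P}(\tau_{\omega,\nu}(s_0, s') > m r_{\pi}) \leq (1-\varepsilon)^{m}$ for each $m \geq 1$, and summing the tail bound gives $\mathbb{E}[\tau_{\omega,\nu}(s_0, s')] < \infty$ as claimed.
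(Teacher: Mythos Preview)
Your proof is correct but takes a genuinely different route from the paper's. The paper never collapses the two layers of randomness: it fixes a length-$r$ path $(s_{k_0+j})_{j=0}^{r}$ from $s_0$ to $s'$ guaranteed by Remark~\ref{remark-1}, forms the random product
\[
W^{\pi} := \widehat{P}^{\pi}_{k_0}(s_{k_0}, s_{k_0+1}) \cdots \widehat{P}^{\pi}_{k_0+r-1}(s_{k_0+r-1}, s_{k_0+r})
\]
of \emph{empirical} one-step probabilities along that path, and uses the independence of the $\widehat{P}^{\pi}_k$ across $k$ to get $\mathbb{E}_1[W^{\pi}] = \prod_j P^{\pi}(s_{k_0+j}, s_{k_0+j+1}) > 0$, hence $\mathbb{P}_1(W^{\pi} > \epsilon) > \delta$ for some $\epsilon,\delta > 0$. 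Since, conditional on $\omega$, the $\mathbb{P}_2$-probability of traversing the chosen path is at least $W^{\pi}$, this yields $\mathbb{P}(\tau_{\omega,\nu} \leq r) > \epsilon\delta$, and the block argument finishes exactly as in your last paragraph.

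What you do instead is integrate out $\omega$ immediately and recognise that the $\mathbb{P}$-marginal law of $(X_k)$ is \emph{exactly} the homogeneous chain with kernel $P^{\pi}$, after which everything is a one-line citation of positive recurrence on a finite irreducible aperiodic chain. This is cleaner and shorter for the present lemma. The paper's approach, by keeping the $\omega$-layer explicit and working with the random quantity $W^{\pi}$, is heavier here but provides a template that is reused almost verbatim in Lemma~\ref{thm:finite-coupling-pi} (two chains, product $W^{\pi}_1 W^{\pi}_2$) and Lemma~\ref{thm:finite-tau-sigma} (non-stationary $\sigma$, replace $P^{\pi}$ by $P^{\sigma_k}$); your marginal-law reduction would also handle those extensions, but the paper's path-product machinery makes them mechanical without re-arguing the Markov property each time.
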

\begin{proof}
Consider a sequence of states, $(s_{k_{0}+j})^{r}_{j=0}$,  with $s_{k_{0}}=s_{0}$ and $s_{k_{0}+r}=s'$ such that ${P}^{\pi}\left(s_{k_{0}}, s_{k_{0}+1}\right) \cdots   {P}^{\pi}\left(s_{k_{0}+r-1}, s_{k_{0}+r}\right) > 0$. By Remark \ref{remark-1}, such a sequence of states exists. Furthermore, $r$ can be picked independent of the choice of $s_0, s'$ and we assume that it is so. Let
\begin{equation*}
W^{\pi} = W^{\pi}((s_{k_{0}+j})^{r}_{j=0}) := \widehat{P}^{\pi}_{k_{0}}\left(s_{k_{0}}, s_{k_{0}+1}\right) \cdots   \widehat{P}^{\pi}_{k_{0}+r-1}\left(s_{k_{0}+r-1}, s_{k_{0}+r}\right).
\end{equation*}
Using \eqref{eq:simulationfunction1}-\eqref{eq:phatpi}, $\mathbb{E}_{1}[\widehat{P}^{\pi}_{k}] = P^{\pi}$ $\forall k $.  Since $\widehat{P}^{\pi}_{k}$ are i.i.d.,
\begin{equation*}
\mathbb{E}_{1}\left[W^{\pi}\right] =  {P}^{\pi}\left(s_{k_{0}}, s_{k_{0}+1}\right) \cdots   {P}^{\pi}\left(s_{k_{0}+r-1}, s_{k_{0}+r}\right) ~~>~~ 0.
\end{equation*}
So there exist  $\epsilon > 0, \delta > 0$ such that $\mathbb{P}_{1}\left(W^{\pi} > \epsilon \right) > \delta$. Then,
\begin{align*}
\mathbb{P}\left(\tau_{\omega, \nu}(s_{0}, s') \leq r \right)  &~\geq~ \mathbb{P}_{2}\left(\tau_{\omega, \nu}(s_{0}, s') \leq r  | W^{\pi}  > \epsilon \right) \mathbb{P}_{1}\left(W^{\pi}  > \epsilon \right) ~>~ \epsilon \delta,
\end{align*}
because $\mathbb{P}_{2}\left(\tau_{\omega, \nu}(s_{0}, s') \leq r  | W^{\pi}  \right)  \geq \mathbb{P}_{2}\left( X_{k_{0}+r} = s', X_{k_{0}} = s_{0} | W^{\pi} \right) \geq W^{\pi}$.  Therefore,
\begin{equation*}
\mathbb{P}\left(\tau_{\omega, \nu}(s_{0}, s') > r \right)  ~\leq~ (1-\epsilon \delta).
\end{equation*}
Due to the i.i.d.\ nature of $\omega$ and the Markov property of $X_{k}(\omega, \nu)$, it is clear that the above probability does not depend on $k_{0}$ and hence, for any $k > 0$,
\begin{equation*}
\mathbb{P}\left(\tau_{\omega, \nu}(s, s') > kr \right)  ~\leq~ (1-\epsilon \delta)^{k}.
\end{equation*}
\begin{align*}
\text{Then,}~~~~ \mathbb{E}\left[\tau_{\omega, \nu}(s, s') \right] = \sum_{t \geq 0} \mathbb{P}  \left(\tau_{\omega, \nu}(s, s') > t \right) & \leq \sum_{k \geq 0} r \mathbb{P}\left(\tau_{\omega, \nu}(s, s') > kr \right) \\
& \leq r \sum_{k \geq 0}  (1-\epsilon \delta)^{k} < \infty.
\end{align*}
\end{proof}

We next show that the expected value of the coupling time is finite when the chain is controlled by a stationary strategy, i.e., $\phi^{2}_{k} \approx \pi \in \Pi, \forall k$.

\begin{lem}
\label{thm:finite-coupling-pi}
Let $(X^{1}_{k}(\omega, \nu), Z^1_{k})_{k \geq k_{0}}, (X^{2}_{k}(\omega', \nu'), Z^{2}_{k})_{k \geq k_{0}}$ be two sequences of state-action pairs for an MDP simulated according to \eqref{eq:simfunc-2-compos2} using  a stationary control strategy   $\phi^{2}_{k} \approx \pi \in \Pi, \forall k$.  Let  $\widetilde{\tau}_{\omega^*, \nu^*}$ be the \textit{coupling time} as defined in equation \eqref{eq:defn-couplingtime}.  Then,
\[\mathbb{E}\left[\widetilde{\tau}_{\omega^*, \nu^*}(s^{1}_{0}, s^{2}_{0})\right]  < \infty, \forall s^{1}_{0}, s^{2}_{0}  \in \mathbb{S}.  \]
\end{lem}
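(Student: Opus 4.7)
The plan is to adapt the single-chain hitting-time argument of Lemma \ref{thm:finite-tau-pi} to the joint (two-chain) evolution, exploiting that although both chains are driven by the same stationary policy $\pi$, they use independent copies $(\omega,\nu)$ and $(\omega',\nu')$ of the noise. This independence is exactly what lets us force both chains to meet at the same state in a bounded time with uniformly positive probability.

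First I would fix an arbitrary target state $s^{*}\in\mathbb{S}$ and invoke Remark \ref{remark-1} to pick $r:=r_{\pi}$ so that $(P^{\pi})^{r}(s,s')>0$ for every $(s,s')\in\mathbb{S}\times\mathbb{S}$. For any starting pair $(s^{1}_{0},s^{2}_{0})$, I then choose a path $(s^{1}_{k_{0}},\ldots,s^{1}_{k_{0}+r}=s^{*})$ of length $r$ from $s^{1}_{0}$ to $s^{*}$ and an analogous path $(s^{2}_{k_{0}},\ldots,s^{2}_{k_{0}+r}=s^{*})$ from $s^{2}_{0}$ to $s^{*}$, each with strictly positive $P^{\pi}$-weight. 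Because $|\mathbb{S}|<\infty$, the minimum of the two path-weight products, taken over all starting pairs and over all admissible path choices, is bounded below by a common $\eta_{0}>0$.

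Next, exactly as in Lemma \ref{thm:finite-tau-pi}, I would form the empirical path-weight random variables
\[
W^{1}:=\prod_{j=0}^{r-1}\widehat{P}^{\pi}_{k_{0}+j}(\omega)\bigl(s^{1}_{k_{0}+j},s^{1}_{k_{0}+j+1}\bigr),\qquad W^{2}:=\prod_{j=0}^{r-1}\widehat{P}^{\pi}_{k_{0}+j}(\omega')\bigl(s^{2}_{k_{0}+j},s^{2}_{k_{0}+j+1}\bigr).
\]
Since $\omega$ and $\omega'$ are independent and the kernels $\widehat{P}^{\pi}_{k}$ are independent across $k$, the pair $(W^{1},W^{2})$ is independent under $\mathbb{P}_{1}\otimes\mathbb{P}_{1}'$, with $\mathbb{E}[W^{i}]\geq\eta_{0}$ for $i=1,2$. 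Hence there exist $\epsilon,\delta>0$ such that $\mathbb{P}(W^{i}>\epsilon)\geq\delta$, and by independence $\mathbb{P}(W^{1}>\epsilon\text{ and }W^{2}>\epsilon)\geq \delta^{2}$. On that event, the two chains follow their prescribed $s^{*}$-reaching paths with conditional $\mathbb{P}_{2}\otimes\mathbb{P}_{2}'$-probability at least $\epsilon^{2}$, and are thus coupled at time $k_{0}+r$. Combining, I obtain
\[
\mathbb{P}\bigl(\widetilde{\tau}_{\omega^{*},\nu^{*}}(s^{1}_{0},s^{2}_{0})\leq r\bigr)\ \geq\ \epsilon^{2}\delta^{2}\ =:\ p_{0}\ >\ 0,
\]
with $p_{0}$ uniform in the initial pair $(s^{1}_{0},s^{2}_{0})$.

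Finally, I would close the argument by a standard renewal/geometric tail estimate. The two-state-process $(X^{1}_{k},X^{2}_{k})$ is a Markov chain under $\mathbb{P}$ by the i.i.d.\ structure of $\omega_{k},\omega'_{k},\nu_{k},\nu'_{k}$, and the bound $p_{0}$ does not depend on the current pair of states. Hence, iterating the estimate above in blocks of $r$ steps, $\mathbb{P}(\widetilde{\tau}_{\omega^{*},\nu^{*}}>kr)\leq (1-p_{0})^{k}$, so summing the tail yields $\mathbb{E}[\widetilde{\tau}_{\omega^{*},\nu^{*}}(s^{1}_{0},s^{2}_{0})]<\infty$. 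The main subtlety is just the bookkeeping that ensures $W^{1}$ and $W^{2}$ are truly independent and that the lower bound $p_{0}$ can be chosen uniformly in the initial pair; once that is in place, the proof is essentially a mechanical lift of Lemma \ref{thm:finite-tau-pi}.
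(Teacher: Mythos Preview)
Your proposal is correct and follows essentially the same route as the paper: pick a common target state reachable in $r$ steps via Remark~\ref{remark-1}, form the empirical path-weight products $W^{1},W^{2}$ for the two independently driven chains, use independence to get $\mathbb{P}(W^{1}>\epsilon,\,W^{2}>\epsilon)\geq\delta^{2}$ and hence $\mathbb{P}(\widetilde{\tau}\leq r)\geq\epsilon^{2}\delta^{2}$, then finish with the same block-geometric tail argument as in Lemma~\ref{thm:finite-tau-pi}. The only cosmetic difference is that you make the independence of $(\omega,\nu)$ and $(\omega',\nu')$ explicit, whereas the paper invokes it via the ``abuse of notation'' remark about the two copies of $(\Omega,\mathcal{F},\mathbb{P})$.
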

\begin{proof}
Consider two sequences of states, $(s^{1}_{k_{0}+j})^{r}_{j=0}$ and $(s^{2}_{k_{0}+j})^{r}_{j=0}$ with $s^{1}_{k_{0}}=s^{1}_{0}$, $s^{2}_{k_{0}}=s^{2}_{0}, s^{1}_{k_{0}+r}=s^{2}_{k_{0}+r}=s$, for some $s \in \mathbb{S}$ such that
\begin{align*}
{P}^{\pi}\left(s^{1}_{k_{0}}, s^{1}_{k_{0}+1}\right) \cdots   {P}^{\pi}\left(s^{1}_{k_{0}+r-1}, s^{1}_{k_{0}+r}\right) > 0, ~~\text{and}~~~
\end{align*}
\begin{align*}
{P}^{\pi}\left(s^{2}_{k_{0}}, s^{2}_{k_{0}+1}\right) \cdots   {P}^{\pi}\left(s^{2}_{k_{0}+r-1}, s^{2}_{k_{0}+r}\right) > 0.
\end{align*}
By Remark \ref{remark-1}, such  $(s^{1}_{k_{0}+j})^{r}_{j=0}$ and $(s^{2}_{k_{0}+j})^{r}_{j=0}$ exist. Using, by abuse of notation, some common notation for entities defined on the two copies of $(\Omega, \mathcal{F}, \mathbb{P})$, let
\begin{align*}
W^{\pi}_{1} &= W^{\pi}_{1}((s^{1}_{k_{0}+j})^{r}_{j=0}) := \widehat{P}^{\pi}_{k_{0}}\left(s^{1}_{k_{0}}, s^{1}_{k_{0}+1}\right) \cdots   \widehat{P}^{\pi}_{k_{0}+r-1}\left(s^{1}_{k_{0}+r-1}, s^{1}_{k_{0}+r}\right), \\
W^{\pi}_{2} &= W^{\pi}_{2}((s^{2}_{k_{0}+j})^{r}_{j=0}) := \widehat{P}^{\pi}_{k_{0}}\left(s^{2}_{k_{0}}, s^{2}_{k_{0}+1}\right) \cdots   \widehat{P}^{\pi}_{k_{0}+r-1}\left(s^{2}_{k_{0}+r-1}, s^{2}_{k_{0}+r}\right).
\end{align*}
As in the proof of  Lemma \ref{thm:finite-tau-pi},
\begin{align*}
\mathbb{E}_{1}\left[W^{\pi}_{1}\right] &=  {P}^{\pi}\left(s^{1}_{k_{0}}, s^{1}_{k_{0}+1}\right) \cdots   {P}^{\pi}\left(s^{1}_{k_{0}+r-1}, s^{1}_{k_{0}+r}\right) ~>~ 0, \\
\mathbb{E}_{1}\left[W^{\pi}_{2}\right] &= {P}^{\pi}\left(s^{2}_{k_{0}}, s^{2}_{k_{0}+1}\right) \cdots   {P}^{\pi}\left(s^{2}_{k_{0}+r-1}, s^{2}_{k_{0}+r}\right) ~>~ 0.
\end{align*}
So there exist $\epsilon > 0, \delta > 0$ such that $\mathbb{P}_{1}\left(W^{\pi}_{1} > \epsilon \right) > \delta$ and $\mathbb{P}_{1}\left(W^{\pi}_{2} > \epsilon \right) > \delta$. Moreover, due to the independence of $\widehat{P}^{\pi}_{k_{0}+j}(s^{1}_{k_{0}+j}, s^{1}_{k_{0}+j+1})$ and $\widehat{P}^{\pi}_{k_{0}+j}(s^{2}_{k_{0}+j}, s^{2}_{k_{0}+j+1})$,
\[\mathbb{P}_{1}\left(W^{\pi}_{1} > \epsilon, W^{\pi}_{2} > \epsilon \right) ~>~ \delta^{2}.  \]
Also,
\[\mathbb{P}_{2}\left( X^{1}_{k_{0}+r} =X^{2}_{k_{0}+r}, X^{1}_{k_{0}}=s^{1}_{0}, X^{2}_{k_{0}}=s^{2}_{0} | W^{\pi}_{1}, W^{\pi}_{2} \right) ~ \geq~ W^{\pi}_{1} W^{\pi}_{2}. \]
Then, by an argument analogous to that of Lemma \ref{thm:finite-tau-pi}, we have
\begin{align*}
\mathbb{P}\left(\widetilde{\tau}_{\omega^*, \nu^*}(s^{1}_{0}, s^{2}_{0}) \leq r \right)  &~\geq~ \mathbb{P}_{2}\left(\widetilde{\tau}_{\omega, \nu}(s^{1}_{0}, s^{2}_{0}) \leq r  | W^{\pi}_{1} > \epsilon, W^{\pi}_{2} > \epsilon \right) \mathbb{P}_{1}\left(W^{\pi}_{1} > \epsilon, W^{\pi}_{2} > \epsilon \right)  \\
&\geq \epsilon^{2} \delta^{2},
\end{align*}
where the $\epsilon, \delta$ may be chosen independent of the choice of $s_0^1, s_0^2$. Hence
\begin{equation*}
\mathbb{P}\left(\widetilde{\tau}_{\omega^*, \nu^*}(s^{1}_{0}, s^{2}_{0}) > r \right)  ~\leq~ (1-\epsilon^{2} \delta^{2}).
\end{equation*}
Now the same arguments as in the proof of Lemma \ref{thm:finite-tau-pi} can be applied to get the desired conclusion.
\end{proof}

We now extend the result of  Lemma \ref{thm:finite-tau-pi} and Lemma \ref{thm:finite-coupling-pi} to  non-stationary control strategies. For that, we use the following result from \cite{borkar1991topics}  for a homogeneous MDP defined by the original transition kernel $p(\cdot | \cdot, \cdot)$. We include the proof for completeness. 

\begin{lem}\cite[Lemma 1.1, Page 42]{borkar1991topics}
\label{thm:lemma-tau-sigma-borkar}
$(X_{k}, Z_{k}), k \geq k_{0},$ be the sequence of state-action pairs corresponding to the homogeneous MDP defined by an arbitrary control strategy   $\sigma \in \Sigma$ and the   transition kernel $p(\cdot | \cdot, \cdot)$. Then,  there exist integer $r^{*}$ and $\epsilon > 0$ such that
\begin{equation*}
\mathbb{P}(\tau(s, s') > r^{*}) ~<~ 1-\epsilon, ~~~\forall s, s' \in \mathbb{S}.
\end{equation*}
\end{lem}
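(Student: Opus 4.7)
The plan is to isolate the worst case adversarial strategy for avoiding $s'$ and to show that Assumption \ref{assumption:1} rules out any such strategy uniformly. Define the maximum escape probability
\[
U(s) \,:=\, \sup_{\sigma \in \Sigma} \mathbb{P}\bigl(\tau(s, s') = \infty \mid X_{k_0} = s;\, \sigma\bigr), \qquad s \in \mathbb{S},
\]
so that $U(s') = 0$. Once I establish $U \equiv 0$, the monotone quantities $V^{*}_{r}(s) := \inf_{\sigma} \mathbb{P}(\tau(s, s') \leq r \mid X_{k_0} = s;\, \sigma)$ satisfy $V^{*}_{r}(s) \nearrow 1 - U(s) = 1$. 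Since $\mathbb{S}$ is finite, this monotone convergence becomes uniform over $s, s' \in \mathbb{S}$, so some finite $r^{*}$ makes $V^{*}_{r^{*}}(s) \geq 1/2$ for every pair $(s, s')$; this is precisely the conclusion of the lemma with $\epsilon = 1/2$.

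The core step is a level set analysis of $U$. Conditioning on the first action and successor state, and noting that the continuation of $\sigma$ from time $k_{0}+1$ may be optimized independently of $\sigma_{k_0}$, shows that $U$ obeys the Bellman relation
\[
U(s) \,=\, \max_{a \in \mathbb{A}} \sum_{s'' \in \mathbb{S} \setminus \{s'\}} p(s'' \mid s, a)\, U(s''), \qquad s \neq s'.
\]
Suppose toward a contradiction that $U_{\max} := \max_{s} U(s) > 0$, and set $S_{*} := \{s : U(s) = U_{\max}\}$. Because $U(s') = 0 < U_{\max}$, we have $s' \notin S_{*}$, so $S_{*} \subsetneq \mathbb{S}$. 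For each $s \in S_{*}$, combining the Bellman identity with the elementary estimate $\sum_{s'' \neq s'} p(s'' \mid s, a)\, U(s'') \leq (1 - p(s' \mid s, a))\, U_{\max}$ forces any maximizing action $a^{*}(s)$ to satisfy simultaneously $p(s' \mid s, a^{*}(s)) = 0$ and $U(s'') = U_{\max}$ whenever $p(s'' \mid s, a^{*}(s)) > 0$; equivalently, the support of $p(\cdot \mid s, a^{*}(s))$ lies inside $S_{*}$. Extending $a^{*}$ arbitrarily outside $S_{*}$ produces a deterministic stationary policy $\pi^{*} \in \Pi$ under which the proper subset $S_{*}$ is forward invariant, so $P^{\pi^{*}}$ cannot be irreducible. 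This contradicts Assumption \ref{assumption:1} and forces $U \equiv 0$, completing the plan.

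The main technical hurdle is rigorously justifying the Bellman relation for $U$: one must show that the supremum over non-stationary $\sigma \in \Sigma$ factorizes into an outer max over the initial action $a$ and an inner sup over continuation strategies. This is a standard dynamic programming fact for finite state-action MDPs that follows by selecting a near optimal continuation from each successor state and splicing it with the chosen initial action, the finiteness of $\mathbb{A}$ turning the sup into a max. Unlike Lemmas \ref{thm:finite-tau-pi} and \ref{thm:finite-coupling-pi}, where one exhibits an explicit positive probability path under a \emph{fixed} stationary policy, no such explicit construction works here because $\sigma$ may adversarially concentrate its mass on actions that avoid $s'$; it is the minimax / level-set argument that forces progress despite this adversarial freedom.
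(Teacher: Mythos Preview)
Your argument is correct and reaches the same endgame as the paper---a deterministic stationary policy under which a proper subset of $\mathbb{S}$ is forward invariant, contradicting Assumption~\ref{assumption:1}---but the route is genuinely different. The paper argues by contradiction and compactness: assuming the bound fails, it produces a sequence of controlled chains with $\mathbb{P}(\tau^\alpha>\alpha)>1-1/\alpha$, uses tightness and Skorohod's theorem to extract a limiting chain with $\tau^\infty=\infty$ a.s., and from that reads off the absorbing set $H$. You instead set up the Bellman equation for the adversarial non-hitting probability and run a maximum-principle/level-set argument on its fixed point. Your approach is more elementary for the finite setting (no Skorohod embedding, no subsequences), and it makes the uniformity in $(s,s')$ and in $\sigma$ fall out directly from finiteness; the paper's compactness route is heavier but transports more readily to compact state/action spaces. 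One small point worth tightening: the claim $V^*_r\nearrow 1-U$ is not automatic from $U\equiv 0$ alone---it is the identity $\lim_r\sup_\sigma\mathbb{P}(\tau>r)=\sup_\sigma\mathbb{P}(\tau=\infty)$, which in general is only an inequality. It does hold here, either by noting that $W_r:=\sup_\sigma\mathbb{P}(\tau>r)$ decreases to a fixed point $W_\infty$ of your operator and applying your level-set argument directly to $W_\infty$, or by observing that the stationary policy built from the Bellman maximizers of $W_\infty$ achieves $\mathbb{P}(\tau=\infty)\ge W_\infty$, forcing $W_\infty=U$.
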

\begin{proof}
Suppose not. Then, there exists a sequence of controlled Markov chains $\{ X^{\alpha}_{k}, k \geq k_{0} \}$, $\alpha = 1, 2, \ldots$ governed by  control strategies $\{\sigma^{\alpha}_{k}, t \geq k_{0}\}$ (with the corresponding control sequences $\{Z^{\alpha}_k, k \geq k_0\}$) such that the following holds: If $\tau^{\alpha}(s, s') := \min\{k \geq 0 | X^{\alpha}_{k_{0}+k} = s', X^{\alpha}_{k_{0}} = s  \}$, then
\[\mathbb{P}\left(\tau^{\alpha}(s, s') > \alpha \right)  > 1 - \frac{1}{\alpha}, ~~ \alpha \geq 1. \]
Since the state and action spaces are finite, the laws of $\{(X^{\alpha}_k, Z^{\alpha}_k), k \geq k_0\}, \alpha \geq 1$, are tight. By dropping to a subsequence if necessary and invoking Skorohod's theorem, we may assume that these chains are defined on a common probability space, and there exists a controlled Markov chain $\{X^{\infty}_{k}, k \geq k_{0}\}$ governed by controls $Z^{\infty}_k, k \geq k_0$, corresponding to a control strategy $\sigma^{\infty}$ with $X^{\infty}_{k_{0}} = s$, such that $\left(X^{\alpha}_{k}, Z^{\alpha}_{k}\right)_{k \geq 0} \rightarrow \left(X^{\infty}_{k}, Z^{\infty}_{k}\right)_{k \geq 0}$ a.s. Since
\[\mathbb{P}\left(\tau^{\alpha}(s, s') > j \right) = \mathbb{E}\left[\prod^{j}_{k=1} \mathbb{I}\{X^{\alpha}_{k_{0}+k} \neq s' \}, \right]~~ \alpha, t = 1, 2, \ldots, \]
a straightforward limiting argument leads to
\[\text{Pr}\left(\tau^{\infty}(s, s') > \alpha \right)  > 1 - \frac{1}{\alpha}, ~~ \alpha \geq 1. \]
for $\tau^{\infty}(s, s') := \min\{k \geq 0 | X^{\infty}_{k_{0}+k} = s', X^{\infty}_{k_{0}} = s  \}$. Then, $\tau^{\infty} = \infty$ a.s. This is possible only if there exists a non-empty subset $H$ of $\mathbb{S} \setminus \{s'\}$ such that for each $i \in H$, $\max_{k \not \in H} \min_{a \in \mathbb{A}} p(k | i, a) =0$.
Let $a_{i}$ be the action at which the above minimum is achieved. Then the chain starting at $H$ and governed by a stationary control strategy $\pi$ such that $\pi(i)=a_{i}$ never leaves $H$. This contradicts  Assumption \ref{assumption:1} that under any stationary control strategy, $\mathbb{S}$ is irreducible. Thus, the given statement must hold.
\end{proof}

Now we extend the result of Lemma \ref{thm:finite-tau-pi} to non-stationary control strategies. 
\begin{lem}
\label{thm:finite-tau-sigma}
Let $(X_{k}(\omega, \nu), Z_{k})_{k \geq k_{0}}$ be the sequence of state-action pairs for  the  MDP  simulated according to \eqref{eq:simfunc-2-compos2}  using  an arbitrary control strategy   $\phi^{2}_{k} \approx \sigma_{k}, \forall k $. Let  $\tau_{\omega, \nu}$ be the \textit{hitting time} as defined in equation \eqref{eq:hitting-time}. Then,
\[\mathbb{E}\left[\tau_{\omega, \nu}(s_{0}, s') \right]  < \infty, ~\forall s_{0}, s' \in \mathbb{S}  \]
\end{lem}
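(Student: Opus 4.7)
The plan is to extend the argument of Lemma \ref{thm:finite-tau-pi} (stationary case) to a non-stationary strategy by first obtaining a uniform hitting-time estimate for the chain under the \emph{true} kernel via Lemma \ref{thm:lemma-tau-sigma-borkar}, and then transferring that estimate to the simulated (empirical) chain using the independence of the random matrices $\widehat{P}_k^{\sigma_k}$ across $k$ together with $\mathbb{E}_1[\widehat{P}_k^{\pi}] = P^\pi$.

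First I would upgrade Lemma \ref{thm:lemma-tau-sigma-borkar} to a form uniform over time-shifts of the given strategy. Since $\mathbb{A}$ is finite, the set of non-stationary strategies $\Sigma$ is compact in the product topology of pointwise convergence, and in particular the family $\{(\sigma_{k_0+k})_{k\geq 0} : k_0 \geq 0\}$ is precompact. The same tightness/Skorohod contradiction used by Borkar then yields constants $r^* \in \mathbb{N}$ and $\epsilon > 0$, independent of $k_0, s_0, s'$, such that for the chain on the \emph{true} kernel controlled by the shifted strategy $(\sigma_{k_0+k})_{k \geq 0}$,
\[
\mathbb{P}^{\mathrm{true}}\bigl(\tau(s_0,s') \leq r^* \mid X_{k_0} = s_0 \bigr) \geq \epsilon, \qquad \forall s_0, s' \in \mathbb{S},\ \forall k_0 \geq 0.
\]

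Second, I would transfer this bound to the empirical chain. The event $\{\tau_{\omega,\nu}(s_0,s') \leq r^*\}$ is a disjoint union over paths $(s_{k_0} = s_0, s_{k_0+1}, \ldots, s_{k_0+j} = s')$ with $j \leq r^*$, and conditional on $\omega$ the probability of traversing any such path is the product $\prod_{l=0}^{j-1} \widehat{P}_{k_0+l}^{\sigma_{k_0+l}}(s_{k_0+l}, s_{k_0+l+1})$. Because the factors for distinct $l$ are $\mathbb{P}_1$-independent and each has mean $P^{\sigma_{k_0+l}}(s_{k_0+l}, s_{k_0+l+1})$, Fubini gives
\[
\mathbb{P}\bigl(\tau_{\omega,\nu}(s_0,s') \leq r^*\bigr) \;=\; \mathbb{P}^{\mathrm{true}}\bigl(\tau(s_0,s') \leq r^*\bigr) \;\geq\; \epsilon,
\]
so $\mathbb{P}(\tau_{\omega,\nu}(s_0,s') > r^*) \leq 1 - \epsilon$ uniformly in $s_0, s', k_0$.

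Third, exactly as in the proof of Lemma \ref{thm:finite-tau-pi}, I would iterate this bound at the times $k_0, k_0+r^*, k_0+2r^*, \dots$ using the Markov property of the simulated chain (each block of $r^*$ steps uses fresh i.i.d.\ noise $\omega_{k_0+jr^*}, \ldots, \omega_{k_0+(j+1)r^*-1}$, and the uniform estimate applies regardless of the current state and shifted tail of $\sigma$). This gives $\mathbb{P}(\tau_{\omega,\nu}(s_0,s') > jr^*) \leq (1-\epsilon)^j$ and hence $\mathbb{E}[\tau_{\omega,\nu}(s_0,s')] \leq r^* \sum_{j\geq 0}(1-\epsilon)^j < \infty$.

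The main obstacle is justifying uniformity of $r^*$ and $\epsilon$ over the tails $(\sigma_{k_0+k})_{k\geq 0}$: Lemma \ref{thm:lemma-tau-sigma-borkar} as stated delivers constants that a priori depend on the strategy, and the iteration above collapses unless they can be chosen uniformly. The cleanest fix is the compactness argument sketched above, using that for each fixed $r$ the $r$-step hitting probability is continuous in $\sigma$ (a finite polynomial in the entries of $\pi_k(s,a)$) on the compact space $\Sigma$, so that the strict positivity supplied by Lemma \ref{thm:lemma-tau-sigma-borkar} pointwise can be turned into a uniform lower bound.
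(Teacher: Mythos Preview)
Your approach matches the paper's in overall structure: invoke Lemma~\ref{thm:lemma-tau-sigma-borkar} for the chain under the true kernel, transfer the bound to the empirical chain, then iterate over blocks of length $r^*$ exactly as in Lemma~\ref{thm:finite-tau-pi}. The difference lies in the transfer step. The paper fixes a single positive-probability path $(s_{k_0+j})_{j=0}^{j^*}$ furnished by Lemma~\ref{thm:lemma-tau-sigma-borkar}, defines $W^\sigma$ as the product of empirical one-step probabilities along it, uses independence across $k$ to get $\mathbb{E}_1[W^\sigma]>0$, and then extracts $\epsilon',\delta>0$ with $\mathbb{P}_1(W^\sigma>\epsilon')>\delta$, concluding $\mathbb{P}(\tau_{\omega,\nu}\leq r^*)\geq\epsilon'\delta$. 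Your Fubini computation over \emph{all} hitting paths gives the exact identity $\mathbb{P}(\tau_{\omega,\nu}(s_0,s')\leq r^*)=\mathbb{P}^{\mathrm{true}}(\tau(s_0,s')\leq r^*)\geq\epsilon$ directly, which is cleaner and avoids having to control the probability of a particular path as $k_0$ varies. As for your stated obstacle: the proof of Lemma~\ref{thm:lemma-tau-sigma-borkar} is already a compactness/Skorohod contradiction ranging over arbitrary sequences of strategies $\sigma^\alpha$, so the $r^*$ and $\epsilon$ it produces are uniform over all of $\Sigma$ and in particular over every tail $(\sigma_{k_0+k})_{k\geq 0}$; the ``upgrade'' you propose is exactly what that proof already delivers, so no extra argument is required.
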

\begin{proof}
Proof is similar to that of Lemma \ref{thm:finite-tau-pi}. By Lemma \ref{thm:lemma-tau-sigma-borkar}, there exists a $j^{*}, 0 < j^{*} \leq r^{*}$ and  a sequence of states, $(s_{k_{0}+j})^{j^{*}}_{j=0}$,  with $s_{k_{0}}=s_{0}$ and $s_{k_{0}+j^{*}}=s'$ such that \\${P}^{\sigma_{k_{0}+1} }\left(s_{k_{0}}, s_{k_{0}+1}\right) \cdots   {P}^{\sigma_{k_{0}+j^{*}}}\left(s_{k_{0}+r-1}, s_{k_{0}+j^{*}}\right) > 0$ where ${P}^{\sigma_{k}}$ is defined as in \eqref{eq:ppi} by replacing $\pi$ with $\sigma_{k}$.  Let
\begin{equation*}
W^{\sigma} = W^{\sigma}((s_{k_{0}+j})^{j^{*}}_{j=0}) := \widehat{P}^{\sigma_{k_{0}}}_{k_{0}}\left(s_{k_{0}}, s_{k_{0}+1}\right) \cdots   \widehat{P}^{\sigma_{k_{0}+j^{*}}-1}_{k_{0}+r-1}\left(s_{k_{0}+r-1}, s_{k_{0}+j^{*}}\right),
\end{equation*}
where $\widehat{P}^{\sigma_{k}}$ is defined as in \eqref{eq:phatpi} by replacing $\pi$ with $\sigma_{k}$.
As in the proof of Lemma \ref{thm:finite-tau-pi} $\mathbb{E}[\widehat{P}^{\sigma_{k}}_{k}] = P^{\sigma_{k}}$, $\forall k $  and since $\widehat{P}^{\sigma_{k}}_{k}$ are independent $\forall k$,
\[\mathbb{E}_{1}[W^{\sigma}] = {P}^{\sigma_{k_{0}+1} }\left(s_{k_{0}}, s_{k_{0}+1}\right) \cdots   {P}^{\sigma_{k_{0}+j^{*}}}\left(s_{k_{0}+r-1}, s_{k_{0}+j^{*}}\right) ~~>~~ 0   \]
Then, there exists an $\epsilon > 0,  \delta > 0$ such that $\mathbb{P}_{1}\left(W^{\sigma}  > \epsilon \right) > \delta$. Then, as in the proof of Lemma \ref{thm:finite-tau-pi},
\begin{equation*}
\mathbb{P}\left(\tau_{\omega, \nu}(s_{0}, s') > r \right)  ~\leq~ (1-\epsilon \delta),~~\text{and}~~
\mathbb{E}\left[\tau_{\omega, \nu}(s_{0}, s') \right] < \infty.
\end{equation*}
\end{proof}

Now, the proof of Proposition \ref{thm:finite-coupling-sigma}  is straightforward by combining the proofs of Lemma \ref{thm:finite-coupling-pi}  and Lemma \ref{thm:finite-tau-sigma}.


\bibliographystyle{acmtrans-ims} 
\bibliography{References-EQVI}

\end{document}